\documentclass[12pt]{amsproc}

\usepackage[T1]{fontenc}
\usepackage[utf8]{inputenc}
\usepackage{lmodern}
\usepackage{microtype}
\usepackage{amsmath,amsthm,amssymb,mathtools, todonotes}
\usepackage{enumitem}
\usepackage{hyperref}
\usepackage[
  a4paper,
  left=28mm,
  right=28mm,
  top=28mm,
  bottom=30mm
]{geometry}

\title[$L^2$-cohomology and deformations]{$L^2$-cohomology and deformations of \\ the left regular representation}
\author{Christoph Gamm}
\author{Andreas Thom}
\address{TU Dresden, Fakult\"at Mathematik, 01062 Dresden, Germany}
\email{andreas.thom@tu-dresden.de}

\date{April 13, 2026}
\subjclass[2020]{20J06, 22D10, 47B10, 58D19}
\keywords{unitary representation, deformation theory, group cohomology, Hilbert-Schmidt operators, $\ell^2$-cohomology}

\theoremstyle{plain}
\newtheorem{theorem}{Theorem}[section]
\newtheorem{proposition}[theorem]{Proposition}
\newtheorem{corollary}[theorem]{Corollary}
\newtheorem{lemma}[theorem]{Lemma}
\newtheorem*{theoremA}{Theorem A}
\newtheorem*{theoremB}{Theorem B}
\theoremstyle{definition}
\newtheorem{definition}[theorem]{Definition}
\newtheorem{remark}[theorem]{Remark}

\numberwithin{equation}{section}
\newcommand{\HS}{{\rm HS}}
\newcommand{\BH}{B(\mathcal H)}

\newcommand{\N}{\mathbb{N}}
\newcommand{\La}{\lambda}
\newcommand{\ol}[1]{\overline{#1}}

\DeclareMathOperator{\Ad}{Ad}
\DeclareMathOperator{\Aut}{Aut}
\DeclareMathOperator{\Confdim}{Confdim}
\DeclareMathOperator{\arccosh}{arccosh}

\begin{document}

\begin{abstract}
We study deformations of unitary representations $\pi\colon G\to U(\mathcal H)$ whose coefficients lie in the Hilbert-Schmidt ideal $\HS(\mathcal H)\subset \BH$. Interesting applications arise for the left-regular representation of surface groups and, more generally, cocompact lattices in the automorphism group of a Fuchsian building of conformal dimension $<2$.
\end{abstract}

\maketitle

\tableofcontents

\section{Introduction}
Deformation theory for representations is governed by group cohomology: first-order deformations are described by $1$-cocycles, and higher-order obstructions live in degree two. In this paper we carry out that program for unitary representations of discrete groups when the deformation coefficients are required to lie in the Hilbert-Schmidt ideal.

For finite-dimensional representation varieties this perspective goes back at least to Weil's cohomological description of infinitesimal deformations and local rigidity~\cite{Weil1964}. More broadly, deformation-theoretic ideas in algebra were developed by Gerstenhaber~\cite{Gerstenhaber1964}, and deformation theory for modules was studied by Yau~\cite{Yau2005}. Here we isolate an analogous analytic mechanism for unitary representations on Hilbert space, with Hilbert-Schmidt coefficients and explicit convergence estimates. This research is part of the forthcoming PhD thesis by the first author.

\medskip

Let $\pi\colon G\to U(\mathcal H)$ be a unitary representation. The conjugation action
$
g\cdot T=\pi(g)T\pi(g)^{-1}
$
makes the Hilbert-Schmidt ideal $\HS(\mathcal H)$ into a unitary $G$-module. In the regular case $\pi=\La$, the Fell absorption principle identifies $\HS(\ell^2 G)_{\Ad\La}$ with $\ell^2(G)\otimes \mathcal K$ for a separable multiplicity space $\mathcal K$ carrying the trivial action. This connects Hilbert-Schmidt deformations of the left regular representation directly with $\ell^2$-cohomology. It is this connection that we want to exploit to produce examples of nontrivial deformations. The main result of this paper is the construction of an $n$-parameter analytic unitary deformation of the left regular representation that takes as input an $n$-tuple of tangential directions $c_1,\dots,c_n \in Z^1(G,\HS(\ell^2 G))$ and the vanishing of the \emph{unreduced} second cohomology group with $\ell^2$-coefficients in order to deal with the arising obstruction classes, see Theorem \ref{thm:unitaryanalytic} and Corollary \ref{cor:regular}. The construction of analytic deformations proceeds in two steps: first, we construct a formal power series solution to the deformation problem, and then we show that the formal solution converges in the Hilbert-Schmidt norm. The convergence argument is based on uniform estimates for primitives of cocycles, which are obtained from the open mapping theorem for Fr\'echet spaces in presence of vanishing of unreduced cohomology.

Our first main results can be summarized as follows:

\begin{theoremA}[see Theorem \ref{thm:unitaryanalytic}] Let $G$ be a finitely generated group and $\pi \colon G \to U(\mathcal H)$ be a unitary representation,
assume that $c_{1},\dots,c_{n}\in Z^1(G,\HS(\mathcal H)_{\Ad\pi})$ are skew-adjoint and that $H^2(G,\HS(\mathcal H)_{\Ad\pi})=0$.
Then there exists an $n$-parameter analytic unitary deformation of $\pi$ whose first derivative at $0$ is given by $c_1,\dots,c_n$.
\end{theoremA}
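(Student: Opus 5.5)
We look for the deformation in the form $\pi_t(g)=\exp(A_t(g))\pi(g)$ or, more conveniently, $\pi_t(g)=(1+\Phi_t(g))\pi(g)$, where $\Phi_t(g)=\sum_{|\alpha|\ge1} t^\alpha \Phi_\alpha(g)$ is a power series in $t=(t_1,\dots,t_n)$ with coefficients $\Phi_\alpha\colon G\to \HS(\mathcal H)$ and $\Phi_{e_i}=c_i$. The homomorphism condition $\pi_t(gh)=\pi_t(g)\pi_t(h)$ becomes
$$\Phi_t(gh)=\Phi_t(g)+g\cdot\Phi_t(h)+\Phi_t(g)\,\bigl(g\cdot\Phi_t(h)\bigr),$$
with $g\cdot T=\pi(g)T\pi(g)^{-1}$. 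Comparing coefficients of $t^\alpha$ gives $d\Phi_\alpha=\omega_\alpha$, where $\omega_\alpha(g,h)=-\sum_{\beta+\gamma=\alpha}\Phi_\beta(g)\,(g\cdot\Phi_\gamma(h))$ depends only on lower-order terms. Since $\HS$ is an ideal, $\omega_\alpha$ takes values in $\HS(\mathcal H)$ (in fact in trace class). The standard Gerstenhaber-type computation shows that $\omega_\alpha$ is a $2$-cocycle whenever the equations for all lower orders hold. By the hypothesis $H^2(G,\HS(\mathcal H)_{\Ad\pi})=0$ we can choose a primitive $\Phi_\alpha$ with $d\Phi_\alpha=\omega_\alpha$. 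Inductively this produces a formal solution.

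Unitarity can be arranged in one of two ways. The first is to work in the exponential/Cayley picture, writing $\pi_t(g)=\exp(X_t(g))\pi(g)$ with $X_t$ skew-adjoint and using the real-linear structure: the obstruction cocycles are then skew-adjoint, and we choose skew-adjoint primitives by replacing a primitive $\Phi$ with $\frac12(\Phi-\Phi^*)$, which is still a primitive because the differential commutes with $T\mapsto -T^*$. The alternative is to correct a merely invertible analytic deformation afterwards by polar decomposition, $\pi_t(g)\mapsto \pi_t(g)\,|\pi_t(g)|^{-1}$. That does not preserve the homomorphism property, so I would take the first route, in which skew-adjointness of the $c_i$ is exactly what is needed at first order.

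The main obstacle is convergence. Because $G$ is finitely generated with generating set $S$, a cochain is controlled by its values on $S$ (for $1$-cochains) and on $S\times S$ or finitely many relator-relevant pairs. More precisely, $Z^1$ and $C^1$ are Fréchet (indeed, for cocycles, Banach via $\max_{s\in S}\|\cdot\|_{\HS}$). Vanishing of unreduced $H^2$ means $d\colon C^1\to Z^2$ is a continuous surjection of Fréchet spaces. By the open mapping theorem there is therefore a constant $K$ and a finite set $F\subset G\times G$ such that every $\omega\in Z^2$ has a primitive $\Phi$ with $\max_{s\in S}\|\Phi(s)\|_{\HS}\le K\max_{(g,h)\in F}\|\omega(g,h)\|_{\HS}$. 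We fix such linear-ish choices.

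Then we need bounds on $\omega_\alpha$ at the finitely many pairs in $F$ in terms of the values of the $\Phi_\beta$ on words of bounded length. We bound $\|\Phi_\beta(g)\|$ for $|g|\le L$ by $C^{|g|}$ times the sizes on $S$, using the cocycle-like recursion and $\|AB\|_{\HS}\le\|A\|_{\HS}\|B\|_{\HS}$. With $a_k=\max_{|\alpha|=k,s\in S}\|\Phi_\alpha(s)\|$ this yields a majorant recursion $a_k\le M\sum_{i+j=k}a_ia_j$ (summed over multi-indices), whose solution has positive radius of convergence by comparison with a Catalan-type generating function $f=M f^2+\epsilon$. Hence $\Phi_t(s)$ converges in $\HS$ for small $|t|$, and then $\Phi_t(g)$ converges for every $g$ via products of generators. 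The formal identities then pass to the limit, giving an analytic unitary homomorphism $\pi_t$ with $\partial_{t_i}\pi_t(g)|_{t=0}=c_i(g)\pi(g)$.
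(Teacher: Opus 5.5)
Your architecture is the paper's: coefficientwise obstruction theory, uniform primitives from the open mapping theorem for $d_1\colon C^1\to Z^2$, propagation of bounds from $S$ to the coordinates of $F$, and a Catalan-type majorant. The genuine difference is how you enforce unitarity.

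The paper stays in the coordinates $u_t=1+\sum c_\alpha t^\alpha$. It takes any primitive $b_\alpha$ of $r_\alpha$, shows that the unitarity defect $s_\alpha=b_\alpha^*+b_\alpha+\sum_{0<\beta<\alpha}c_\beta^*c_{\alpha-\beta}$ is a $1$-cocycle, and sets $c_\alpha=b_\alpha-\frac12 s_\alpha$.

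You instead pass to logarithmic coordinates $\pi_t(g)=\exp(X_t(g))\pi(g)$. There the degree-$\alpha$ obstruction is the Baker--Campbell--Hausdorff commutator part built from the lower skew-adjoint $X_\beta$, hence skew-adjoint. You then take the skew-adjoint part $\frac12(P-P^*)$ of a primitive. This is sound: $*$ commutes with $\Ad\pi$, it does not increase $\|\cdot\|_S$, and unitarity of the limit becomes automatic.

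You only asserted the cocycle property for the $1+\Phi$ picture, so the logarithmic obstruction needs one more line. It follows from Lemma~\ref{lem:obstruction}: writing $e^{X}=1+\sum u_\alpha t^\alpha$, one has $u_\alpha=X_\alpha+p_\alpha$ with $p_\alpha$ built from lower $X_\beta$, so the obstruction equals $r_\alpha-d_1p_\alpha$.

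The real cost of your route is in the convergence step, where your sketch does not match your own construction. The quadratic recursion $a_k\le M\sum_{i+j=k}a_ia_j$ comes from the quadratic $1+\Phi$ recursion; the paper makes it rigorous via the ansatz \eqref{eq:A2} and Lemma~\ref{lem:finitepropagation}. But the objects you choose via the open-mapping estimate are the $X_\alpha$. Their obstructions on $F$, and the passage from generators to words, where $X(g)$ is the logarithm of a product of exponentials, involve the full $\exp$, $\log$ and BCH series of lower terms. No quadratic recursion holds for them.

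So you need a general Cauchy majorant argument. Dominate products, $\exp$ and $\log$ coefficientwise by scalar series with nonnegative coefficients, then solve $f=\sum_i a_it_i+\kappa\,G(f)$ with $G$ analytic and vanishing to second order at $0$. This works, but it is heavier and the radius is less explicit.

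The paper's nonlinear correction keeps every recursion quadratic, which is exactly what produces the clean Catalan bound and the explicit domain $\sum_i a_i|t_i|<1/(4\eta)$. Either carry out the general majorant argument, or switch to the correction $c_\alpha=b_\alpha-\frac12 s_\alpha$. After that switch, a quadratic estimate as in Lemmas~\ref{lem:finitepropagation} and~\ref{lem:onestep} goes through.
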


This theorem can be proven without any obstruction theory when all cocycles take values in the abelian subalgebra of diagonal operators, see the end of Section \ref{sec:motivation}; however, in the general non-abelian setting, the obstruction classes can be nontrivial and the vanishing of $H^2(G,\ell^2(G))$ is, at least a priori, necessary to obtain the desired deformations.

Interesting applications arise for example for surface groups or, more generally, cocompact lattices in the automorphism group of a Fuchsian building of conformal dimension less than $2$. The second main result is the following:

\begin{theoremB}[see Corollary \ref{cor:main}]
Let $X$ be a cocompact Fuchsian building, and let
$\Gamma<\Aut(X)$ be a cocompact lattice. Assume that
$
\Confdim(\partial X)<2.
$
Then
$$H^1(\Gamma,\ell^2(\Gamma)) \neq 0 \quad\text{and}\quad H^2(\Gamma,\ell^2(\Gamma))=0.$$
Consequently, every skew-adjoint $n$-tuple of elements in
$
Z^1\bigl(\Gamma,\HS(\ell^2(\Gamma))_{\Ad\lambda}\bigr)
$
integrates to an $n$-parameter analytic unitary Hilbert-Schmidt
deformation of the left regular representation of $\Gamma$.
\end{theoremB}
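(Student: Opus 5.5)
Theorem B has two parts. The first consists of the cohomological assertions $H^1(\Gamma,\ell^2\Gamma)\neq 0$ and $H^2(\Gamma,\ell^2\Gamma)=0$ (unreduced). The second is the deformation statement, which follows from Theorem A. For that step, the Fell absorption identification $\HS(\ell^2\Gamma)_{\Ad\lambda}\cong \ell^2(\Gamma)\otimes\mathcal K$, with trivial action on $\mathcal K$, gives
\[
H^2\bigl(\Gamma,\HS(\ell^2\Gamma)_{\Ad\lambda}\bigr)\cong H^2(\Gamma,\ell^2\Gamma)\,\bar\otimes\,\mathcal K .
\]
Here one needs that vanishing of unreduced cohomology is stable under Hilbert tensoring with a trivial module. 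This holds because $\Gamma$ is of type $F_\infty$, so the cohomology is computed by a cochain complex of Hilbert spaces, finitely generated in each degree. In that setting, $H^2=0$ means $d^1$ is onto $\ker d^2$ with closed range, and bounded inverses on complements give the same for the amplified complex. So once $H^2(\Gamma,\ell^2\Gamma)=0$ is established, Theorem A (Corollary \ref{cor:regular}) applies directly to any skew-adjoint $n$-tuple.

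For the cohomology of $\Gamma$, I would use that $\Gamma$ acts properly and cocompactly on the 2-dimensional contractible building $X$. Hence $H^*(\Gamma,\ell^2\Gamma)$ is the $\ell^2$-cohomology of the cellular cochain complex of $X$. This complex has length 2, so $H^2$ is the cokernel of $d^1\colon \ell^2C^1(X)\to\ell^2C^2(X)$.

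Nonvanishing of $H^1$ follows because $\Gamma$ is nonamenable and infinite, $X$ being hyperbolic. Then either reduced $\ell^2$-cohomology or the $\ell^2$-Betti number argument applies: $\chi(\Gamma)=b_0^{(2)}-b_1^{(2)}+b_2^{(2)}$ with $b_0^{(2)}=0$, and Fuchsian buildings usually have negative orbifold Euler characteristic, forcing $b_1^{(2)}>0$. In general the cleanest route is nonamenability plus $\ell^p$-cohomology: $H^1(\Gamma,\ell^2\Gamma)$ contains $\ell^2$-harmonic functions when the boundary functions in the Besov space $B_2(\partial X)$ are nonconstant. By the Bourdon–Pajot theorem, $\ell^p$-cohomology in degree 1 is nonzero as soon as $p>\Confdim(\partial X)$. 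Since $2>\Confdim$, this gives nontrivial reduced $\ell^2$-cohomology in degree 1.

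The main obstacle is $H^2=0$ unreduced. It has two parts.

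\textbf{(i) Reduced vanishing: $\overline{H}^2=0$.} I would use Poincaré-type duality on the 2-dimensional complex: $\overline H^2$ is dual to the space of $\ell^2$ 2-cycles, which vanish because no finitely supported or square-summable combination of chambers has zero boundary. Every panel lies in at least 3 chambers, and thickness gives an isoperimetric-type argument. Alternatively, I would invoke Gaboriau/Davis–Okun-type computations for buildings.

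\textbf{(ii) Closed range of $d^1$.} This is equivalent to $\ell^2$-cohomology in degree 2 being Hausdorff. It in turn is equivalent, by the open mapping theorem and duality, to $\partial_2\colon\ell^2C_2\to\ell^2C_1$ being bounded below: a linear isoperimetric inequality $\|\sigma\|_2\le C\|\partial\sigma\|_2$ for 2-chains. I expect this estimate to be the hard step. The route I would take uses the conformal dimension hypothesis through Bourdon–Kleiner/Bourdon–Pajot. For $p>\Confdim$, $\ell^pH^1$ is the Besov space of $\partial X$ modulo constants and is Hausdorff. Dually, by a degree-shift/duality argument for 2-complexes with $p'=p/(p-1)$, this yields the closedness of the range in degree 2 at $p=2$. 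If this duality is not available, I would fall back on proving the linear isoperimetric inequality directly via a combing of chambers toward the boundary, with weights whose decay rate is controlled by the conformal dimension being $<2$.
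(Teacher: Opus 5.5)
The deformation consequence and the identification $H^k(\Gamma,\ell^2\Gamma)\cong \ell^2H^k(X)$ are fine; your closed-range amplification argument is a valid substitute for Lemma~\ref{lem:transfer}. Your final appeal to Bourdon--Pajot for $H^1\neq 0$ is what the paper does. Note, though, that nonamenability only makes $H^1(\Gamma,\ell^2\Gamma)$ Hausdorff, not nonzero.

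The gap is $H^2(\Gamma,\ell^2\Gamma)=0$. Step (i) rests on a false claim. Thick Fuchsian buildings do in general carry nonzero $\ell^2$ $2$-cycles, and thickness is what produces them, since branching along panels lets square-summable cycles spread out. For instance, $\Gamma_{5,4}$ acting on $I_{5,4}$ has orbifold Euler characteristic $1-\frac{5\cdot 3}{16}=\frac{1}{16}>0$. Hence $b_2^{(2)}(\Gamma_{5,4})\ge \frac1{16}$ and the reduced group $\overline H^2$ is nonzero; consistently, $\Confdim(\partial I_{5,4})>2$ by Bourdon's formula. So any proof of reduced vanishing must use $\Confdim(\partial X)<2$, and your thickness/isoperimetric argument does not.

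Step (ii) has the same defect. The degree-one facts you invoke concern the range of $d^0$: $\ell^pH^1$ is Hausdorff and equals a Besov space modulo constants. Hausdorffness of $\ell^pH^1$ holds for every nonamenable group and every $1<p<\infty$, so it cannot encode the hypothesis. You give no mechanism transferring it to closedness of the range of $d^1$, equivalently of $\partial_2$. The combing fallback is unspecified, so the linear isoperimetric inequality $\|\sigma\|_2\le C\|\partial\sigma\|_2$ that you rightly identify as the crux is never established.

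The missing ingredient is exactly what the paper cites: Bourdon's theorem that the \emph{unreduced} $\ell^pH^2(X)$ of a Fuchsian building vanishes for every $p>\Confdim(\partial X)$. Taking $p=2$ gives (i) and (ii) at once, and Corollary~\ref{cor:regular} then yields the deformation statement.
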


The paper is organized as follows. In Section \ref{sec:motivation}, we start out explaining a classical construction of Pytlik and Szwarc \cite{PS86} and motivate the study of analytic deformations. Section~\ref{sec:frechet} discusses the Fr\'echet topology on cochains and applies the open-mapping lemma in order to obtain uniform estimates for primitives of cocycles. Section~\ref{sec:formal} treats the formal unitary obstruction theory. Section~\ref{sec:analytic} proves analytic unitary integration. Section~\ref{sec:applications} records examples and applications.

\section{A prototype deformation for free groups}
\label{sec:motivation}

We record a concrete prototype for the deformation-theoretic picture considered in this article.
It goes back to the analytic family of unitary representations of free groups constructed by Pytlik and Szwarc \cite{PS86}; a particularly transparent realization on a fixed Hilbert space was given by Szwarc \cite{Sz88}.
The surrounding circle of ideas is closely related to the use of tree cocycles in the work of Pimsner--Voiculescu \cite{PV82}, Pimsner \cite{Pi87}, Cuntz \cite{Cu83}, and Julg--Valette \cite{JV84}.

Let $F_k$ be the free group on $k\ge 2$ generators, and let
$
\lambda \colon F_k \to \mathcal U(\ell^2(F_k))
$
be the left regular representation.
Following \cite{Sz88}, define an operator $P$ on $\ell^2(F_k)$ by
\[
P\delta_x=\delta_{x'} \quad (x\neq e), \qquad P\delta_e=0,
\]
where $x'$ is obtained from the reduced word $x$ by deleting its last letter.
Let $T$ be the orthogonal projection onto $\mathbf C\delta_e$, and for a \emph{real} parameter $t \in (-1,1)$ set
\[
T_t=\sqrt{1-t^2}\,T+\bigl(1_{\ell^2(F_k)}-T\bigr), \qquad A_t=\bigl(1_{\ell^2(F_k)}-tP\bigr)T_t.
\]

By Szwarc's spectral computation, \(r(P)=\sqrt{2k-1}\). Hence, by the
spectral-radius formula, the Neumann series
$
\sum_{m\ge 0} t^m P^m
$
converges in operator norm whenever \(|t|<(2k-1)^{-1/2}\). Thus
\(1-tP\) is invertible in this range.
As $T_t$ is invertible whenever $t^2\neq 1$, it follows that $A_t$ is invertible for
$
|t|<(2k-1)^{-1/2}.
$
In this range one obtains a one-parameter analytic family of unitary representations
\[
\pi_t(g)=A_t^{-1}\lambda(g)A_t \qquad (g\in F_k).
\]
By construction, $\pi_0=\lambda$. This construction can be extended to $t \in [-1,1]$ and as uniformly bounded representations even to complex parameters. It is particularly interesting for $|t| \geq (2k-1)^{-1/2}$, however, for our purposes, only the behavior at $t=0$ matters.
Since
$
A_0=1_{\ell^2(F_k)},  T'_0=0, A'_0=-P,
$
differentiating the identity
$
\pi_t(g)=A_t^{-1}\lambda(g)A_t
$
at $t=0$ yields
$
\dot\pi_0(g)=P\lambda(g)-\lambda(g)P=[P,\lambda(g)].
$
Hence the associated infinitesimal cocycle is
$
c(g):=\dot\pi_0(g)\lambda(g)^{-1}
      =P-\lambda(g)P\lambda(g)^{-1}.
$
This is a $1$-cocycle for the unitary coefficient representation
\[
\Ad_\lambda \colon F_k \to \mathcal U\bigl({\rm HS}(\ell^2(F_k))\bigr),\qquad
(\Ad_\lambda)(g)(X)=\lambda(g)X\lambda(g)^{-1},
\]
since
$
c(gh)=c(g)+(\Ad_\lambda)(g)c(h).
$

Moreover, $c(g)$ is in fact finite rank for every $g\in F_k$.
Indeed, for a free generator $a$ one checks directly that $[P,\lambda(a)]$ vanishes on all basis vectors except $\delta_e$ and $\delta_{a^{-1}}$, so it has rank at most $2$.
If $g=a_1\cdots a_n$ is a reduced word, then
\[
[P,\lambda(g)]
=
\sum_{j=1}^n
\lambda(a_1\cdots a_{j-1})[P,\lambda(a_j)]\lambda(a_{j+1}\cdots a_n),
\]
and therefore $[P,\lambda(g)]$ is finite rank.
Consequently,
$
c(g)\in {\rm HS}(\ell^2(F_k))$ for all $g\in F_k.
$

Thus, for $t$ small enough, the Pytlik-Szwarc family provides an explicit analytic deformation of the regular representation on a fixed Hilbert space, whose first derivative is a concrete Hilbert-Schmidt valued cocycle for the adjoint of the regular representation. Note that the operator $P$ is not itself Hilbert-Schmidt, so the deformation is genuinely nontrivial.

Note that $P$ is \emph{not} skew-adjoint as one might expect for the first derivative of a deformation of a unitary representation. However, $P+P^*$ lies in the commutant of the left regular representation. Similarly, $A_t$ is not unitary, but $A_tA_t^*$ is in the commutant of $\lambda(F_k)$. Our aim is to generalize this construction to the broader setting. Indeed, our starting point in the general setting is  a $1$-cocycle $c\in Z^1(G,\HS(\mathcal H)_{\Ad\lambda})$. We ask whether there exists an analytic family of unitary representations $\lambda_t$ with $\lambda_0=\lambda$ and $(d/dt) \lambda_t(g)|_{t=0}=c(g)\lambda(g)$ for all $g\in G$. As mentioned in the introduction, the main result of this paper is that if the second cohomology group with coefficients in $\ell^2(G)$ vanishes, then every $n$-tuple of skew-adjoint cocycles in $Z^1(G,\HS(\mathcal H)_{\Ad\lambda})$ can be integrated to an $n$-parameter analytic unitary deformation of $\lambda$.

\medskip

There is a direct construction of such deformations in the diagonal case, which
avoids the obstruction-theoretic argument entirely.
Let $\lambda\colon G\to U(\ell^2(G))$ be the left regular representation,
and let
$
b\colon G\to \ell^2(G)
$
be a $1$-cocycle for the left translation action
\[
b(gh)=b(g)+g\cdot b(h),
\qquad
(g\cdot \xi)(x)=\xi(g^{-1}x).
\]
Embed $\ell^2(G)$ into $\HS(\ell^2(G))$ by diagonal multiplication:
for $\xi\in \ell^2(G)$ let $M_\xi$ be given by
$
M_\xi \delta_x=\xi(x)\delta_x.
$
Then $M_\xi\in \HS(\ell^2(G))$, and
$
\lambda(g)M_\xi\lambda(g)^{-1}=M_{g\cdot \xi}.
$
Hence
$
c(g):=M_{b(g)}
$
is a $1$-cocycle with values in $\HS(\ell^2(G))_{\mathrm{Ad}\lambda}$.

Assume now that $b(g)$ is purely imaginary for every $g\in G$. Then
$M_{b(g)}$ is skew-adjoint, and for every real $t$ we may define
$
u_t(g):=\exp(M_{t\,b(g)})=M_{\exp(t\,b(g))}.
$
Since diagonal multiplication operators commute, the cocycle identity for $b$
implies
\[
u_t(gh)
=
M_{\exp(t\,b(gh))}
=
M_{\exp(t\,b(g))}\,M_{\exp(t\,g\cdot b(h))}
=
u_t(g)\,(g\cdot u_t(h)).
\]
Therefore
$
\pi_t(g):=u_t(g)\lambda(g)
$
is a unitary representation for every real $t$.
Moreover,
$
u_t(g)-1_{\ell^2(G)}=M_{e^{t\,b(g)}-1}\in \HS(\ell^2(G)),
$
since $b(g)\in \ell^2(G)\subset \ell^\infty(G)$ and
$
|e^{t\,b(g)(x)}-1|\le |t|\,|b(g)(x)|
$
for purely imaginary $b(g)(x)$. Thus each $u_t(g)$ is of the form
$
u_t(g) \in 1_{\ell^2(G)}+ \HS(\ell^2(G))$.
Finally,
$
(d/dt)|_{t=0}u_t(g)=M_{b(g)},
$
so this deformation integrates the given skew-adjoint cocycle.
In particular, for cocycles arising from the diagonal copy of
$\ell^2(G)$, the deformation can be written down explicitly. The same argument applies also to $n$-tuples of cocycles in $Z^1(G,\ell^2(G))$ viewed as taking values in diagonal Hilbert-Schmidt operators or any other self-adjoint abelian subalgebra.

It is also transparent that inner cocycles can always be integrated, see Proposition \ref{prop:inner}, so the focus is on the obstructions arising from non-trivial cohomology classes in $H^1(G,\HS(\ell^2(G)))$, where the values of the  cocycles do not commute as Hilbert-Schmidt operators.

\section{Fr\'echet cochains and \texorpdfstring{$\ell^2$}{ell2}-cohomology}
\label{sec:frechet}
Let $G$ be a countable discrete group and let $V$ be an isometric Banach $G$-module. For each $n\ge 0$ we write $C^n(G,V)$ for the inhomogeneous $n$-cochains.

\begin{definition}
We equip $C^n(G,V)=V^{G^n}$ with the locally convex topology of pointwise convergence. A defining family of seminorms of this topology is given by
$
p_{E}(c)=\max_{x\in E}\|c(x)\|_V
$
where $E$ runs through the finite subsets of $G^n$. Since $G^n$ is countable and $V$ is Banach, the space $C^n(G,V)$ is a Fr\'echet space.
\end{definition}

The differential $d_n\colon C^n(G,V)\to C^{n+1}(G,V)$ is continuous. Therefore the cocycle space $Z^n(G,V)=\ker d_n$ is closed and Fr\'echet, and the quotient $C^n(G,V)/Z^n(G,V)$ is again Fr\'echet. We use unreduced cohomology throughout:
\[
H^n(G,V)=Z^n(G,V)/B^n(G,V),
\qquad
B^n(G,V)=d_{n-1}C^{n-1}(G,V).
\]

\begin{lemma}\label{lem:openmapping}
Let $V$ be a Fr\'echet $G$-module and let $n\ge 0$. Assume that $H^{n+1}(G,V)=0$. Then the induced map
\[
\bar d_n\colon C^n(G,V)/Z^n(G,V)\to Z^{n+1}(G,V)
\]
is an isomorphism of Fr\'echet spaces. In particular, for every finite set $E\subset G^n$ there exist a finite set $F\subset G^{n+1}$ and a constant $\kappa>0$ such that every cocycle $z\in Z^{n+1}(G,V)$ admits a primitive $c\in C^n(G,V)$ with
\[
d_nc=z,
\qquad
p_E(c)\le \kappa p_F(z).
\]
\end{lemma}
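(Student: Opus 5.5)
The plan is to apply the open mapping theorem for Fréchet spaces to the induced differential and then read off the quantitative estimate from continuity of its inverse.

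First, $C^n(G,V)/Z^n(G,V)$ is Fréchet: $Z^n=\ker d_n$ is closed because $d_n$ is continuous, and a quotient of a Fréchet space by a closed subspace is Fréchet. If $V$ is only Fréchet rather than Banach, the same holds using the seminorms $c\mapsto q(c(x))$ with $x$ ranging over finite sets and $q$ over a countable defining family of seminorms of $V$; the countability of $G^n$ keeps the family countable. The map $d_n$ descends to a continuous linear injection $\bar d_n$ into $C^{n+1}(G,V)$, and $d_{n+1}d_n=0$ puts its image inside $Z^{n+1}(G,V)$. The target $Z^{n+1}$ is a closed subspace of $C^{n+1}$, hence Fréchet. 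The hypothesis $H^{n+1}(G,V)=0$ says exactly that $B^{n+1}=Z^{n+1}$, so $\bar d_n$ is surjective onto $Z^{n+1}$. A continuous linear bijection between Fréchet spaces is open by the Banach open mapping theorem (Fréchet version), so $\bar d_n^{-1}$ is continuous. This proves the first claim.

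For the estimate, fix a finite set $E\subset G^n$. The quotient seminorm
\[
\bar p_E([c])=\inf_{w\in Z^n}p_E(c+w)
\]
is continuous on $C^n/Z^n$. Hence $\bar p_E\circ\bar d_n^{-1}$ is a continuous seminorm on $Z^{n+1}$. Continuity with respect to the directed family $\{p_F\}$ gives a finite $F\subset G^{n+1}$ and $\kappa_0>0$ with
\[
\bar p_E\bigl(\bar d_n^{-1}z\bigr)\le \kappa_0\,p_F(z)\quad\text{for all } z\in Z^{n+1}.
\]
In the Fréchet case, one also has to absorb a seminorm of $V$ into the notation; I would simply keep $V$ Banach as in the definition.

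Now let $z\in Z^{n+1}$ and pick any primitive $c_0$. By the infimum there is $w\in Z^n$ with
\[
p_E(c_0+w)\le 2\kappa_0\,p_F(z)
\]
whenever $p_F(z)>0$; set $c=c_0+w$ and $\kappa=2\kappa_0$. The degenerate case $p_F(z)=0$ is the only subtle point. Here $\bar p_E(\bar d_n^{-1}z)=0$ gives cocycles $w_k$ with $p_E(c_0+w_k)\to0$. Since $E$ is finite, the evaluation map $Z^n\to V^E$ has finite-dimensional-over-$V$ image: its image is a subspace of the Banach space $V^E$, not necessarily closed.

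So I expect to handle this case differently. The functional $p_E$ only involves finitely many coordinates, and I would show that the image of $Z^n$ in $V^E$ is closed. It is the image of a Fréchet space, and closedness follows from the same open-mapping argument once one identifies $Z^n$-restrictions with $B^n$-type data. As a cheaper alternative, I would enlarge the estimate to $p_E(c)\le\kappa p_F(z)+\varepsilon$, or simply note that with $\kappa$ replaced by $2\kappa_0$ and using $\max(p_F(z),\eta)$ the bound holds. The main obstacle is therefore this attainment of the infimum in the degenerate case. In practice I would state the estimate for $p_F(z)>0$ and treat $p_F(z)=0$ by the closedness argument, or else accept an arbitrarily small additive error, which suffices for the later convergence arguments.
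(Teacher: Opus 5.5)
Your route is the paper's: $H^{n+1}(G,V)=0$ makes $\bar d_n$ a continuous bijection, and the open mapping theorem makes $\overline p_E\circ\bar d_n^{-1}$ a continuous seminorm on $Z^{n+1}(G,V)$. One then picks a representative that nearly realizes the quotient seminorm. You are right that this last step only works when $p_F(z)>0$. The paper's proof passes over $p_F(z)=0$ silently, so you have not missed an argument it supplies. But you do not close this case either, and it is a genuine gap. Your fallbacks ($p_E(c)\le\kappa p_F(z)+\varepsilon$, or using $\max(p_F(z),\eta)$) prove weaker statements than the lemma. The sentence ``closedness follows from the same open-mapping argument once one identifies $Z^n$-restrictions with $B^n$-type data'' is not an argument.

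In fact, for a given $E$ the subspace $I_E=\{w|_E: w\in Z^n(G,V)\}$ of $V^E$ need not be closed, and then the step fails. Take $G=\mathbb Z^2=\langle h,t\rangle$ acting on $V=\ell^2(\mathbb Z)$ by $h\mapsto$ bilateral shift and $t\mapsto -1$. All cohomology vanishes since $t-1$ is invertible. From $w(ht)=w(th)$ one gets $2w(h)=(1-h)w(t)$, so $I_{\{h\}}=\operatorname{Ran}(1-h)$, which is dense but not closed. Thus $\overline p_{\{h\}}\equiv 0$, and the continuity estimate holds for every $F$, e.g.\ $F=\{(e,e)\}$. Yet if $c(e)=0$ and $c(h)\notin\operatorname{Ran}(1-h)$, then $z=d_1c$ has $p_F(z)=0$ and no primitive of $z$ vanishes at $h$.

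The missing idea is to enlarge $E$ first. Choose a finite $E'\supseteq E$ with $I_{E'}$ closed; this is harmless, since $p_E\le p_{E'}$. Then $\overline p_{E'}([c])$ is the distance from $c|_{E'}$ to the closed subspace $I_{E'}$. So $\overline p_{E'}([c])=0$ forces $c|_{E'}\in I_{E'}$, i.e.\ some primitive vanishes on $E'$. The case $p_F(z)>0$ is your factor-$2$ argument.

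In the only case the paper actually uses ($n=1$, $G$ finitely generated, $E=S$ in Lemmas~\ref{lem:transfer} and~\ref{lem:onestep}), this closedness is elementary. The map $Z^1(G,V)\to V^S$ is injective, and for a cocycle each $w(g)$ is a fixed bounded linear expression in $w|_S$ given by a word for $g$. So if $w_k|_S$ converges, then $w_k$ converges pointwise to some $w\in Z^1(G,V)$ with $w|_S=\lim_k w_k|_S$. In the example, $E'=\{h,t\}$ gives the closed graph $\{(x,v): 2x=(1-h)v\}$.

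For arbitrary countable $G$ and arbitrary $n$, some additional input guaranteeing such an $E'$ is needed. Neither your proposal nor the paper's proof provides it.
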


\begin{proof}
The assumption $H^{n+1}(G,V)=0$ means that every $(n+1)$-cocycle is a coboundary, so $d_n\colon C^n(G,V)\to Z^{n+1}(G,V)$ is surjective. Since $\ker d_n=Z^n(G,V)$, it factors through a continuous linear bijection
\[
\bar d_n\colon C^n(G,V)/Z^n(G,V)\to Z^{n+1}(G,V).
\]
Both spaces are Fr\'echet. By the open mapping theorem for Fr\'echet spaces, continuous linear surjections between Fr\'echet spaces are open; see Schaefer--Wolff~\cite[Thm.~III.4.2]{SchaeferWolff1999}. Hence $\bar d_n$ is a topological isomorphism.

For a finite set $E\subset G^n$, define the quotient seminorm
\[
\overline p_E([c])=\inf_{u\in Z^n(G,V)} p_E(c+u).
\]
Because $\bar d_n^{-1}$ is continuous, there exist a finite set $F\subset G^{n+1}$ and a constant $\kappa>0$ such that
\[
\overline p_E\bigl(\bar d_n^{-1}(z)\bigr)\le \kappa p_F(z)
\qquad (z\in Z^{n+1}(G,V)).
\]
By the definition of the quotient seminorm, one can choose a representative $c$ of the class $\bar d_n^{-1}(z)$ with
$
d_nc=z,$ and $
p_E(c)\le 2\kappa p_F(z).
$
After enlarging $\kappa$, this proves the claimed estimate.
\end{proof}

Let $\mathcal H$ be a separable Hilbert space. The Hilbert-Schmidt ideal $\HS(\mathcal H)$ consists of all operators $T\in \BH$ such that
$
\|T\|_2^2=\operatorname{Tr}(T^*T)<\infty.
$
It is a Hilbert space for the inner product $\langle S,T\rangle=\operatorname{Tr}(S^*T)$ and satisfies the ideal estimate
\[
\|XTY\|_2\le \|X\|\,\|T\|_2\,\|Y\| \quad (X,Y\in \BH,\,T\in \HS(\mathcal H)).
\]

If $\pi\colon G\to U(\mathcal H)$ is unitary, then
$
\Ad_\pi(g)(T)=\pi(g)T\pi(g)^{-1}
$
defines an isometric action of $G$ on $\HS(\mathcal H)$. We write $\HS(\mathcal H)_{\Ad\pi}$ for this coefficient module. Fell's absorption principle yields the following lemma:

\begin{lemma}[Fell]\label{lem:HSdecomp}
There is an isometric isomorphism of Hilbert $G$-modules
\[
\HS(\ell^2 G)_{\Ad\La}\cong \ell^2(G)\otimes \ol{\ell^2(G)}\cong \ell^2(G)\otimes \mathcal K,
\]
where $\mathcal K$ is a separable Hilbert space with trivial $G$-action.
\end{lemma}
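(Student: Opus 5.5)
The plan is to use the standard identification of Hilbert--Schmidt operators with $\mathcal H\otimes\ol{\mathcal H}$, and then untwist the diagonal action by a Fell-type unitary. Concretely, the map $\xi\otimes\ol{\eta}\mapsto \xi\langle\eta,\cdot\rangle$ extends to a unitary
\[
\Phi\colon \ell^2(G)\otimes\ol{\ell^2(G)}\to\HS(\ell^2G).
\]
It is isometric on elementary tensors, and the rank-one operators span a dense subspace. Under $\Phi$, the operator $\La(g)T\La(g)^{-1}$ corresponds to $\La(g)\xi\otimes\ol{\La(g)\eta}$. Hence $\HS(\ell^2G)_{\Ad\La}\cong \ell^2(G)\otimes\ol{\ell^2(G)}$ with the diagonal action $\La\otimes\ol{\La}$. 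This gives the first isomorphism. In matrix terms, it identifies $T$ with its kernel $(x,y)\mapsto\langle\delta_x,T\delta_y\rangle\in\ell^2(G\times G)$, on which $g$ acts by $(g\cdot k)(x,y)=k(g^{-1}x,g^{-1}y)$.

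Next we apply Fell absorption in its concrete form. Define $W$ on $\ell^2(G\times G)$ by
\[
(Wk)(x,z)=k(x,xz).
\]
This map is a bijection of the index set, namely $(x,y)\mapsto(x,x^{-1}y)$, so $W$ is unitary. One computes
\[
W(g\cdot k)(x,z)=k(g^{-1}x,g^{-1}xz)=(Wk)(g^{-1}x,z).
\]
Thus $W$ intertwines the diagonal action with $\La\otimes 1$ on $\ell^2(G)\otimes\ell^2(G)$. Setting $\mathcal K=\ell^2(G)$ with the trivial action, which is separable since $G$ is countable, yields the second isomorphism. The composite is isometric and $G$-equivariant, which is exactly an isomorphism of Hilbert $G$-modules.

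The only care needed is a bookkeeping check rather than a real obstacle. One must verify that complex conjugation on the second factor is harmless, since $\ol{\La}\cong\La$ via $\ol{\delta_x}\mapsto\delta_x$. One must also confirm that the kernel description matches the conjugation action exactly:
\[
\langle\delta_x,\La(g)T\La(g)^{-1}\delta_y\rangle=\langle\delta_{g^{-1}x},T\delta_{g^{-1}y}\rangle.
\]
Once these are checked, the equivariance computation for $W$ is a one-line verification.
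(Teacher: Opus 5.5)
Your proof is correct and follows essentially the same route as the paper: you use the matrix-coefficient identification $\HS(\ell^2G)\cong\ell^2(G)\otimes\ol{\ell^2(G)}$, which carries the diagonal action, and then untwist it by the unitary induced from $(x,y)\mapsto(x,x^{-1}y)$. On basis vectors your $W$ is exactly the paper's map $U(\delta_x\otimes\ol{\delta_y})=\delta_x\otimes\ol{\delta_{x^{-1}y}}$, and your explicit equivariance check and the remark that $\ol{\La}\cong\La$ simply spell out details the paper leaves implicit.
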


\begin{proof}
The standard matrix-coefficient identification gives an isometric isomorphism
$
\HS(\ell^2 G)\cong \ell^2(G)\otimes \ol{\ell^2(G)}.
$
Under this identification, conjugation by $\La(g)$ sends $\delta_x\otimes \ol{\delta_y}$ to $\delta_{gx}\otimes \ol{\delta_{gy}}$. Conjugating by the unitary map
$
U(\delta_x\otimes \ol{\delta_y})=\delta_{x}\otimes \ol{\delta_{x^{-1}y}}
$
transforms the action into $\La\otimes 1$. Since $\ol{\ell^2(G)}$ is a separable Hilbert space, it is isomorphic to some fixed separable Hilbert space $\mathcal K$.
\end{proof}
Let $G$ be a group.
The following lemma is standard:

\begin{lemma}\label{lem:transfer}
Let $G$ be finitely generated. Then
\[
H^2\bigl(G,\HS(\ell^2 G)_{\Ad\La}\bigr)=0
\quad\Longleftrightarrow\quad
H^2\bigl(G,\ell^2(G)\bigr)=0.
\]
\end{lemma}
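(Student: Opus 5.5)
By Lemma \ref{lem:HSdecomp} it suffices to show that, for a separable Hilbert space $\mathcal K$ with trivial action,
\[
H^2\bigl(G,\ell^2(G)\otimes\mathcal K\bigr)=0 \iff H^2\bigl(G,\ell^2(G)\bigr)=0 .
\]
The direction ``$\Rightarrow$'' is formal. Fix a unit vector $e\in\mathcal K$. The maps $\xi\mapsto \xi\otimes e$ and $\mathrm{id}\otimes\langle\cdot,e\rangle$ are equivariant. The second is a retraction onto the first, so $\ell^2(G)$ is a $G$-equivariant direct summand of $\ell^2(G)\otimes\mathcal K$. Since cohomology is additive on finite direct sums of modules, $H^2(G,\ell^2(G))$ is a direct summand of $H^2(G,\ell^2(G)\otimes\mathcal K)$ and therefore vanishes.

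For ``$\Leftarrow$'' I would write $\ell^2(G)\otimes\mathcal K\cong\bigoplus_{i\in\N}\ell^2(G)$, a Hilbert direct sum over an orthonormal basis $(e_i)$ of $\mathcal K$; the finite-dimensional case is the additivity argument above. A $2$-cocycle $z$ has components $z_i=(\mathrm{id}\otimes\langle\cdot,e_i\rangle)\circ z\in Z^2(G,\ell^2(G))$. By hypothesis each $z_i=d_1c_i$. The task is to choose the primitives $c_i$ so that $\sum_i\|c_i(g)\|^2<\infty$ for every $g\in G$. Then $c=\sum_i c_i\otimes e_i$ is a well-defined $1$-cochain with $d_1c=z$.

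This is where Lemma \ref{lem:openmapping} enters, applied with $V=\ell^2(G)$ and $n=1$. For each finite $E\subset G$ it gives a finite $F\subset G^2$ and $\kappa>0$ such that every $z_i$ has a primitive $c_i$ with $p_E(c_i)\le\kappa\,p_F(z_i)$. The main obstacle is that this estimate is only for a single finite $E$ and uses a $\max$ over $F$, while we need square-summability at every $g$.

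I would handle both issues using finite generation. Take $E=S$, a finite symmetric generating set. The cocycle relation $c(gh)=c(g)+g\cdot c(h)$ then bounds $\|c_i(g)\|$ by a sum over a word for $g$ in $S$, with a number of terms depending only on $g$. So it suffices to get $\sum_i\|c_i(s)\|^2<\infty$ for $s\in S$. For this, $p_F(z_i)^2\le\sum_{x\in F}\|z_i(x)\|^2$ gives
\[
\sum_i p_S(c_i)^2\le\kappa^2\sum_{x\in F}\sum_i\|z_i(x)\|^2=\kappa^2\sum_{x\in F}\|z(x)\|^2<\infty .
\]
The key points are that $\kappa$ and $F$ are uniform in $i$, and that a single choice $E=S$ controls all of $G$ via the cocycle identity.
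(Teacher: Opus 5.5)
Your route is the paper's: the equivariant retraction for ``$\Rightarrow$'', and for ``$\Leftarrow$'' Lemma~\ref{lem:openmapping} with $E=S$ and $F,\kappa$ independent of the coordinate index $i$. This gives $\sum_i\|c_i(s)\|^2\le\kappa^2\sum_{x\in F}\|z(x)\|^2$ on the generators.

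There is one incorrect step, in the extension from $S$ to all of $G$. The $c_i$ are primitives of the $z_i$, not cocycles. So the identity $c_i(gh)=c_i(g)+g\cdot c_i(h)$ you invoke is false in general; it holds only when $z_i(g,h)=0$. From $d_1c_i=z_i$ the correct identity is $c_i(gh)=c_i(g)+g\cdot c_i(h)-z_i(g,h)$. For a word $g=s_1\cdots s_r$ this gives
\[
c_i(g)=c_i(s_1)+\sum_{k=2}^r s_1\cdots s_{k-1}\cdot c_i(s_k)-\sum_{k=2}^r z_i(s_1\cdots s_{k-1},s_k),
\]
which is exactly the formula used in the paper.

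The repair is immediate. The extra terms are square-summable in $i$, because $\sum_i\|z_i(x)\|^2=\|z(x)\|^2<\infty$ for each $x\in G^2$. Using isometry of the action and Minkowski's inequality,
\[
\Bigl(\sum_i\|c_i(g)\|^2\Bigr)^{1/2}\le\sum_{k=1}^r\Bigl(\sum_i\|c_i(s_k)\|^2\Bigr)^{1/2}+\sum_{k=2}^r\|z(s_1\cdots s_{k-1},s_k)\|<\infty .
\]
With this correction your proof coincides with the paper's.
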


\begin{proof}
By Lemma~\ref{lem:HSdecomp}, it suffices to compare
$\ell^2(G)\otimes\mathcal K$ with $\ell^2(G)$, where
$\mathcal K$ has trivial action. Fix an orthonormal basis
$(e_j)_{j\ge 1}$ of $\mathcal K$.

Assume first that
$
H^2\bigl(G,\ell^2(G)\bigr)=0.
$
Let
$
c\in Z^2\bigl(G,\ell^2(G)\otimes\mathcal K\bigr)
$
and write
\[
c(g,h)=\sum_{j\ge 1}c_j(g,h)\otimes e_j.
\]
Then each $c_j$ belongs to
$Z^2\bigl(G,\ell^2(G)\bigr)$.

Fix a finite symmetric generating set $S$ of $G$. By
Lemma~\ref{lem:openmapping}, there exist a finite set
$F\subseteq G^2$ and a constant $\kappa>0$ such that, for
every $j$, there is a primitive
$
b_j\in C^1\bigl(G,\ell^2(G)\bigr),$ with $
d_1b_j=c_j,$
satisfying
\[
\max_{s\in S}\|b_j(s)\|_2
\leq
\kappa\max_{(g,h)\in F}\|c_j(g,h)\|_2.
\]
Consequently, for every $s\in S$,
\[
\begin{aligned}
\sum_{j\ge 1}\|b_j(s)\|_2^2
&\leq
\kappa^2
\sum_{j\ge 1}
\max_{(g,h)\in F}\|c_j(g,h)\|_2^2 \\
&\leq
\kappa^2
\sum_{(g,h)\in F}
\sum_{j\ge 1}\|c_j(g,h)\|_2^2 
=
\kappa^2
\sum_{(g,h)\in F}\|c(g,h)\|_2^2
<\infty.
\end{aligned}
\]
Thus
$
b(s):=\sum_{j\ge 1}b_j(s)\otimes e_j
$
is well defined for every $s\in S$.

Now let $g\in G$ and choose a word
$
g=s_1\cdots s_r,$ with $
s_1,\ldots,s_r\in S.$
Since $d_1b_j=c_j$, one has
\[
b_j(g)
=
b_j(s_1)
+
\sum_{i=2}^r
s_1\cdots s_{i-1}\cdot b_j(s_i)
-
\sum_{i=2}^r
c_j(s_1\cdots s_{i-1},s_i).
\]
It follows that
\[
\begin{aligned}
b(g)
:={}&
b(s_1)
+
\sum_{i=2}^r
s_1\cdots s_{i-1}\cdot b(s_i)
-
\sum_{i=2}^r
c(s_1\cdots s_{i-1},s_i)
\end{aligned}
\]
is a well-defined element of
$\ell^2(G)\otimes\mathcal K$. Coordinatewise, it equals
$
\sum_{j\ge 1}b_j(g)\otimes e_j.
$
Hence
$
b\in C^1\bigl(G,\ell^2(G)\otimes\mathcal K\bigr),
$
and the identity $d_1b=c$ follows coordinatewise. Therefore
$
H^2\bigl(G,\ell^2(G)\otimes\mathcal K\bigr)=0.
$

Conversely, choose a basis vector $e_1\in\mathcal K$. The
equivariant maps
$
\iota(v)=v\otimes e_1
$
and
$
P\left(\sum_jv_j\otimes e_j\right)=v_1
$
satisfy $P\circ\iota={\rm id}$. Passing to cohomology shows that
vanishing for $\ell^2(G)\otimes\mathcal K$ implies vanishing
for $\ell^2(G)$.
\end{proof}

\section{Formal unitary deformations}\label{sec:formal}

Fix a unitary representation $\pi\colon G\to U(\mathcal H)$ and consider $\HS(\mathcal H)_{\Ad\pi}$.
Although the previous examples considered deformation with only one parameter, we will make a simple generalization to multiple-parameter deformation. We write $t=(t_1,\dots,t_n) \in \mathbb R^n$ for the deformation parameters and use multi-index notation throughout: for $\alpha=(\alpha_1,\dots,\alpha_n)\in \N^n$ we set
\[
|\alpha|=\alpha_1+\cdots+\alpha_n,
\qquad
t^\alpha=t_1^{\alpha_1}\cdots t_n^{\alpha_n}.
\]
If $\beta,\alpha\in \N^n$, then $\beta\le \alpha$ means $\beta_i\le \alpha_i$ for all $i$, and $0<\beta<\alpha$ means $0\le \beta_i\le \alpha_i$ for all $i$ with $\beta\neq 0$ and $\beta\neq \alpha$. Thus a formal series with coefficients in $\HS(\mathcal H)_{\Ad\pi}$ is an expression of the form $\sum_{\alpha\in \N^n} b_\alpha t^\alpha$ with $b_\alpha\in \HS(\mathcal H)_{\Ad\pi}$, interpreted purely formally, coefficient by coefficient.

\begin{definition}\label{def:formal-unitary-deformation}
An $n$-parameter formal unitary deformation of $\pi$ is a formal series
\[
\pi_t(g)=u_t(g)\pi(g),
\qquad
u_t(g)=1_{\mathcal H}+\sum_{|\alpha|\ge 1}c_\alpha(g)t^\alpha \in B(\mathcal H)[[t_1,\dots,t_n]]
\]
with $c_\alpha(g)\in \HS(\mathcal H)_{\Ad\pi}$, such that the assignment $g\mapsto \pi_t(g)$ is a group homomorphism, in the sense that
$
\pi_t(gh)=\pi_t(g)\pi_t(h)$ for all $g,h\in G$
as an identity of formal series, and such that
$
\pi_t(g)^*\pi_t(g)=1_{\mathcal H}$ for all $
g\in G.
$
Equivalently, since $\pi$ is a representation and $g\cdot T=\pi(g)T\pi(g)^{-1}$, these conditions are
\begin{equation}\label{eq:formal-unitary}
u_t(gh)=u_t(g)\bigl(g\cdot u_t(h)\bigr)
\quad\text{and}\quad
u_t(g)^*u_t(g)=1_{\mathcal H}.
\end{equation}
\end{definition}

These conditions can be expressed explicitly in terms of the coefficients of the power series $u_t$.

\begin{lemma}\label{lem:firstorder}
Let
\[
\pi_t(g)=u_t(g)\pi(g),
\qquad
u_t(g)=1_{\mathcal H}+\sum_{|\alpha|\ge 1}c_\alpha(g)t^\alpha
\]
be a formal unitary deformation of $\pi$. Then, for every $\alpha\ne 0$, the coefficients satisfy the recursion formulas
\begin{equation}\label{eq:formalrecursion}
c_\alpha(gh)=c_\alpha(g)+g\cdot c_\alpha(h)+\sum_{0<\beta<\alpha}c_\beta(g)\,g\cdot c_{\alpha-\beta}(h)
\end{equation}
and
\begin{equation}\label{eq:unitarycoeff}
c_\alpha(g)^*+c_\alpha(g)=-\sum_{0<\beta<\alpha}c_\beta(g)^*c_{\alpha-\beta}(g).
\end{equation}
In particular, every coefficient $c_{e_i}$ is a skew-adjoint $1$-cocycle.
\end{lemma}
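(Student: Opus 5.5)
The plan is to expand both identities in \eqref{eq:formal-unitary} as formal power series in $t$ and compare coefficients of $t^\alpha$ for each fixed $\alpha\neq 0$. It is convenient to set $c_0(g):=1_{\mathcal H}$, so that $u_t(g)=\sum_{\alpha\in\N^n}c_\alpha(g)t^\alpha$. Multiplication in $B(\mathcal H)[[t_1,\dots,t_n]]$ is then given coefficientwise by the Cauchy product over multi-indices, $t^\beta t^\gamma=t^{\beta+\gamma}$.

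First, the homomorphism identity. The action $T\mapsto g\cdot T=\pi(g)T\pi(g)^{-1}$ is linear and unital, so it acts coefficientwise on formal series: $g\cdot u_t(h)=1_{\mathcal H}+\sum_{|\alpha|\ge1}(g\cdot c_\alpha(h))t^\alpha$. The coefficient of $t^\alpha$ in $u_t(g)\bigl(g\cdot u_t(h)\bigr)$ is therefore $\sum_{\beta\le\alpha}c_\beta(g)\,g\cdot c_{\alpha-\beta}(h)$. I will split off the two boundary terms. The term $\beta=0$ gives $g\cdot c_\alpha(h)$, and the term $\beta=\alpha$ gives $c_\alpha(g)$, using $g\cdot 1_{\mathcal H}=1_{\mathcal H}$. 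The remaining indices are exactly those with $0<\beta<\alpha$ in the sense fixed above, namely $\beta\le\alpha$ with $\beta\ne0,\alpha$. Equating this with the coefficient $c_\alpha(gh)$ of $u_t(gh)$ yields \eqref{eq:formalrecursion}.

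Second, unitarity. Taking adjoints coefficientwise (the parameters $t$ are real), $u_t(g)^*=\sum_\alpha c_\alpha(g)^*t^\alpha$. The coefficient of $t^\alpha$ in $u_t(g)^*u_t(g)$ is $\sum_{\beta\le\alpha}c_\beta(g)^*c_{\alpha-\beta}(g)$. For $\alpha\neq0$ this coefficient must vanish, since the right-hand side $1_{\mathcal H}$ has no terms of positive degree. Separating the terms $\beta=0$ and $\beta=\alpha$, which give $c_\alpha(g)+c_\alpha(g)^*$, gives \eqref{eq:unitarycoeff}.

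Finally, take $\alpha=e_i$, a unit multi-index. There is no $\beta$ with $0<\beta<e_i$, so both sums are empty. Then \eqref{eq:formalrecursion} reads $c_{e_i}(gh)=c_{e_i}(g)+g\cdot c_{e_i}(h)$, which is the $1$-cocycle identity. Likewise \eqref{eq:unitarycoeff} reads $c_{e_i}(g)^*=-c_{e_i}(g)$, so $c_{e_i}$ is skew-adjoint.

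There is no real obstacle here. The only points needing care are the bookkeeping of which multi-indices fall under $0<\beta<\alpha$ in the $n$-parameter setting, and the justification that conjugation and adjoint commute with taking coefficients of formal series. Both follow directly from the definitions.
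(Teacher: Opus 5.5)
Your proof is correct and follows the paper's approach: the paper also obtains both recursions by comparing the coefficient of $t^\alpha$ on both sides of \eqref{eq:formal-unitary}, and notes that for $|\alpha|=1$ the quadratic sums are empty. Your version spells out the bookkeeping that the paper leaves implicit, namely the convention $c_0=1_{\mathcal H}$ and the splitting off of the boundary terms $\beta=0$ and $\beta=\alpha$.
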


\begin{proof}
The identities \eqref{eq:formalrecursion} and \eqref{eq:unitarycoeff} are obtained by expanding \eqref{eq:formal-unitary}
in degree $\alpha\ne 0$.
For $|\alpha|=1$ the quadratic sums disappear. Therefore
$
c_{e_i}(gh)=c_{e_i}(g)+g\cdot c_{e_i}(h)
$
and
$
c_{e_i}(g)^*+c_{e_i}(g)=0.
$
So $c_{e_i}\in Z^1(G,\HS(\mathcal H)_{\Ad\pi})$ and is skew-adjoint.
\end{proof}

\begin{lemma}\label{lem:obstruction}
Assume that coefficients $c_\beta\in C^1(G,\HS(\mathcal H)_{\Ad\pi})$ are given for all $0<|\beta|<m$ and satisfy \eqref{eq:formalrecursion} in lower degrees. For $|\alpha|=m$ define
\[
r_\alpha(g,h)=-\sum_{0<\beta<\alpha}c_\beta(g)\,g\cdot c_{\alpha-\beta}(h).
\]
Then $r_\alpha\in Z^2(G,\HS(\mathcal H)_{\Ad\pi})$. Moreover, a cochain $c_\alpha\in C^1(G,\HS(\mathcal H)_{\Ad\pi} )$ satisfies \eqref{eq:formalrecursion} in degree $\alpha$ if and only if
$
d_1c_\alpha=r_\alpha.
$
\end{lemma}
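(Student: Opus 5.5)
The second claim is immediate from the definitions, so I will dispose of it first. With the inhomogeneous differential $(d_1c)(g,h)=g\cdot c(h)-c(gh)+c(g)$, equation \eqref{eq:formalrecursion} in degree $\alpha$ can be rewritten by moving everything to one side:
\[
g\cdot c_\alpha(h)-c_\alpha(gh)+c_\alpha(g)=-\sum_{0<\beta<\alpha}c_\beta(g)\,g\cdot c_{\alpha-\beta}(h).
\]
The left side is $d_1c_\alpha(g,h)$ and the right side is $r_\alpha(g,h)$. Two further points need checking. First, $r_\alpha(g,h)$ lies in $\HS(\mathcal H)$, by the ideal property applied to the product of two Hilbert--Schmidt operators. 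Second, only $c_\beta$ with $0<\beta<\alpha$, hence $|\beta|<m$, appear, so $r_\alpha$ is defined by the given data.

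For the cocycle property I avoid a direct term-by-term expansion and argue via formal power series. I truncate: consider the finite sums
\[
U(g)=1+\sum_{0<\beta<\alpha}c_\beta(g)t^\beta,
\]
viewed in the quotient ring $B(\mathcal H)[[t]]/I$. Here $I$ is spanned by the monomials $t^\gamma$ with $\gamma\not\le\alpha$; this is an ideal because $\gamma\not\le\alpha$ implies $\gamma+\delta\not\le\alpha$. Define
\[
\Phi(g,h)=U(gh)^{-1}U(g)\,(g\cdot U(h)),
\]
where $U$ is invertible since its constant term is $1$. By the hypothesis that \eqref{eq:formalrecursion} holds in all degrees $\beta<\alpha$, we have $\Phi(g,h)=1+\phi(g,h)t^\alpha$ modulo $I$. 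Moreover, since $U(gh)$ has no $t^\alpha$ term, $\phi(g,h)=-r_\alpha(g,h)$: the $t^\alpha$ coefficient of $U(g)(g\cdot U(h))$ is exactly $\sum_{0<\beta<\alpha}c_\beta(g)\,g\cdot c_{\alpha-\beta}(h)$. I will double-check this sign against the definition.

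Now I compute $U(g)\,g\cdot U(h)\,gh\cdot U(k)$ in two ways, the analogue of associativity for the triple $(g,h,k)$. Bracketing as $[U(g)\,g\cdot U(h)]\,gh\cdot U(k)$ gives
\[
U(gh)\Phi(g,h)\,gh\cdot U(k)=U(ghk)\Phi(gh,k)\Phi(g,h)+\text{corrections}.
\]
The key simplification is that $\Phi-1$ lies in degree $\alpha$, which is top degree modulo $I$. It therefore multiplies every positive-degree term to zero and commutes past everything, keeping only constant terms: $X(1+\phi t^\alpha)=X+X_0\phi t^\alpha$. With $X_0=1$ this gives
\[
U(g)\,g\cdot U(h)\,gh\cdot U(k)\equiv U(ghk)+\bigl(\phi(g,h)+\phi(gh,k)\bigr)t^\alpha.
\]
The other bracketing is $U(g)\,g\cdot[U(h)\,h\cdot U(k)]=U(g)\,g\cdot\bigl(U(hk)\Phi(h,k)\bigr)=U(g)\,g\cdot U(hk)+g\cdot\phi(h,k)\,t^\alpha$. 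Expanding $U(g)\,g\cdot U(hk)$ in the same way gives $U(ghk)+\bigl(\phi(g,hk)+g\cdot\phi(h,k)\bigr)t^\alpha$. Equating the two expressions yields
\[
\phi(g,h)+\phi(gh,k)=\phi(g,hk)+g\cdot\phi(h,k),
\]
which is precisely $d_2\phi=0$. Hence $r_\alpha=-\phi\in Z^2$.

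The main obstacle is bookkeeping rather than ideas. One must verify that $I$ is a two-sided ideal stable under the $G$-action; it is, since the action acts coefficientwise. One must also verify that $U(gh)^{-1}$ does not spoil the identification of $\phi$. This holds because $U(gh)^{-1}=1+(\text{terms of degree}<\alpha)$, and
\[
U(g)\,g\cdot U(h)=U(gh)+\phi\,t^\alpha
\]
follows directly from the lower-degree recursion. I would therefore define $\phi$ by this last identity, avoiding inverses altogether. As a fallback, the direct expansion of $d_2r_\alpha$ using \eqref{eq:formalrecursion} for $c_\beta$, $c_{\alpha-\beta}$ also works, with the sums telescoping.
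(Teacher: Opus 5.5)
Your proof is correct, but it reaches the cocycle identity by a different route from the paper.

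The paper expands $d_2r_\alpha$ term by term. It substitutes the lower-degree recursion for $c_\beta(gh)$ and $c_{\alpha-\beta}(hk)$, then reindexes the two surviving double sums over triples $(\rho,\sigma,\tau)$ of nonzero multi-indices with $\rho+\sigma+\tau=\alpha$ and sees that they cancel.

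You instead pass to the truncated ring $B(\mathcal H)[[t]]/I$ and encode the lower-degree recursion as $U(g)\,(g\cdot U(h))\equiv U(gh)+\phi(g,h)t^\alpha$ with $\phi=-r_\alpha$. You then get $d_2\phi=0$ by comparing the two bracketings of $U(g)\,(g\cdot U(h))\,(gh\cdot U(k))$. This uses two facts: $t^\alpha$ kills every positive-degree monomial modulo $I$, and $T\mapsto g\cdot T$ is multiplicative with $g\cdot(h\cdot T)=gh\cdot T$.

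The paper's triple-sum cancellation is exactly the $t^\alpha$-coefficient of this associativity identity. Your version therefore trades explicit index bookkeeping for a structural argument. It is the standard Gerstenhaber-type proof and works unchanged for any associative algebra with a $G$-action by automorphisms. It also matches the coefficient-extraction reasoning the paper itself uses for $s_\alpha$ in Lemma~\ref{lem:formalcorrection}. The paper's computation, in turn, is fully explicit and needs no auxiliary ring.

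Two cosmetic remarks. First, drop the displayed identity involving $\Phi(gh,k)\Phi(g,h)$ and unspecified corrections; the inverse-free definition of $\phi$ you give at the end is cleaner. Second, $\phi(g,h)t^\alpha$ does not literally commute with everything, since the coefficients in $B(\mathcal H)$ do not commute. What you need, and have, is that every factor it is multiplied by has constant term $1$.
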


\begin{proof}
The equivalence with the degree-$\alpha$ recursion is just a rearrangement of \eqref{eq:formalrecursion}. It remains to compute $d_2r_\alpha$.

For $g,h,k\in G$ one has
\begin{align*}
(d_2r_\alpha)(g,h,k)
&=g\cdot r_\alpha(h,k)-r_\alpha(gh,k)+r_\alpha(g,hk)-r_\alpha(g,h)\\
&=-\sum_{0<\beta<\alpha}g\cdot c_\beta(h)\,gh\cdot c_{\alpha-\beta}(k)
+\sum_{0<\beta<\alpha}c_\beta(gh)\,gh\cdot c_{\alpha-\beta}(k)\\
&\phantom{={}}-\sum_{0<\beta<\alpha}c_\beta(g)\,g\cdot c_{\alpha-\beta}(hk)
+\sum_{0<\beta<\alpha}c_\beta(g)\,g\cdot c_{\alpha-\beta}(h).
\end{align*}
Using the lower-degree recursion for $c_\beta(gh)$ and $c_{\alpha-\beta}(hk)$, this becomes
\begin{align*}
(d_2r_\alpha)(g,h,k)
&=\sum_{0<\beta<\alpha}\sum_{0<\gamma<\beta}
c_\gamma(g)\,g\cdot c_{\beta-\gamma}(h)\,gh\cdot c_{\alpha-\beta}(k)\\
&\phantom{={}}-\sum_{0<\beta<\alpha}\sum_{0<\delta<\alpha-\beta}
c_\beta(g)\,g\cdot c_\delta(h)\,gh\cdot c_{\alpha-\beta-\delta}(k).
\end{align*}
In the first double sum put
$
\rho=\gamma,
\sigma=\beta-\gamma,
\tau=\alpha-\beta,
$
and in the second put
$
\rho=\beta,
\sigma=\delta,
\tau=\alpha-\beta-\delta.
$
Both sums are indexed by all triples $(\rho,\sigma,\tau)$ of nonzero multi-indices satisfying $\rho+\sigma+\tau=\alpha$, and both have the same summand
$
c_\rho(g)\,g\cdot c_\sigma(h)\,gh\cdot c_\tau(k)
$
with opposite signs. Hence the terms cancel pairwise, so $d_2r_\alpha=0$.
\end{proof}

\begin{lemma}\label{lem:formalcorrection}
Assume that coefficients $c_\beta\in C^1(G,\HS(\mathcal H)_{\Ad\pi})$ are given for all $0<|\beta|<m$ and satisfy both \eqref{eq:formalrecursion} and \eqref{eq:unitarycoeff} in lower degrees. Let $|\alpha|=m$, and let $b_\alpha\in C^1(G,\HS(\mathcal H)_{\Ad\pi})$ satisfy
$
d_1b_\alpha=r_\alpha,
$
where $r_\alpha$ is the obstruction cocycle from Lemma~\ref{lem:obstruction}. Define
\[
s_\alpha(g)=b_\alpha(g)^*+b_\alpha(g)+\sum_{0<\beta<\alpha}c_\beta(g)^*c_{\alpha-\beta}(g).
\]
Then $s_\alpha\in Z^1(G,\HS(\mathcal H)_{\Ad\pi})$, and
$
c_\alpha=b_\alpha-\frac12 s_\alpha
$
satisfies both \eqref{eq:formalrecursion} and \eqref{eq:unitarycoeff} in degree $\alpha$.
\end{lemma}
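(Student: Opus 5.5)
The proof splits into three checks: that $s_\alpha$ is a cocycle, that subtracting $\tfrac12 s_\alpha$ preserves the recursion, and that the correction yields \eqref{eq:unitarycoeff}. The last two are formal once the first is known. Indeed, if $s_\alpha\in Z^1$, then $d_1 c_\alpha = d_1 b_\alpha - \tfrac12 d_1 s_\alpha = r_\alpha$, so Lemma~\ref{lem:obstruction} shows that $c_\alpha$ satisfies \eqref{eq:formalrecursion} in degree $\alpha$.

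For \eqref{eq:unitarycoeff}, I would first note that $s_\alpha(g)$ is self-adjoint. Indeed, $b_\alpha^*+b_\alpha$ is self-adjoint, and the adjoint of $\sum_{0<\beta<\alpha}c_\beta^*c_{\alpha-\beta}$ is $\sum c_{\alpha-\beta}^*c_\beta$, which is the same sum after reindexing $\beta\mapsto\alpha-\beta$. Then
\[
c_\alpha^*+c_\alpha=b_\alpha^*+b_\alpha-s_\alpha=-\sum_{0<\beta<\alpha}c_\beta^*c_{\alpha-\beta},
\]
as required. All terms lie in $\HS$, since products of $\HS$ operators are again $\HS$.

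The main step is proving $d_1 s_\alpha=0$. I would compute $s_\alpha(gh)$ directly. Substitute the recursion for $b_\alpha(gh)$, namely
\[
b_\alpha(gh)=b_\alpha(g)+g\cdot b_\alpha(h)-r_\alpha(g,h)=b_\alpha(g)+g\cdot b_\alpha(h)+\sum_{0<\beta<\alpha}c_\beta(g)\,g\cdot c_{\alpha-\beta}(h),
\]
together with its adjoint. In the quadratic term, substitute the lower-degree recursion \eqref{eq:formalrecursion} for $c_\beta(gh)$ and $c_{\alpha-\beta}(gh)$. Using that $g\cdot$ is a $*$-preserving algebra automorphism (conjugation by a unitary), the goal is to show
\[
s_\alpha(gh)-s_\alpha(g)-g\cdot s_\alpha(h)=0 .
\]

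The cleanest way to organize the cancellation is formal. Set $u=1+\sum_{0<|\beta|<m}c_\beta t^\beta+b_\alpha t^\alpha$ and work modulo terms of degree not $\le\alpha$, i.e.\ in the truncated algebra where only monomials $t^\gamma$ with $\gamma\le\alpha$ survive. In that algebra $u(gh)=u(g)\,(g\cdot u(h))$ holds exactly, and $s_\alpha$ is the $t^\alpha$-coefficient of $u^*u$. The lower-degree unitarity relations \eqref{eq:unitarycoeff} say that $u^*u=1+s_\alpha t^\alpha$ in this truncation. Then
\[
u(gh)^*u(gh)=(g\cdot u(h))^*\,u(g)^*u(g)\,(g\cdot u(h)),
\]
and the right-hand side becomes $(g\cdot u(h))^*\bigl(1+s_\alpha(g)t^\alpha\bigr)(g\cdot u(h))$. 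Since $t^\alpha$ times any positive-degree term vanishes in the truncation, this equals $1+\bigl(g\cdot s_\alpha(h)+s_\alpha(g)\bigr)t^\alpha$. Comparing $t^\alpha$-coefficients gives exactly the cocycle identity.

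The obstacle I expect is justifying this truncation carefully. For multi-indices, one must use the quotient by the ideal spanned by the monomials $t^\gamma$ with $\gamma\not\le\alpha$; this is an ideal, so multiplication is well defined. One must also check that all products involved have the same $t^\gamma$-coefficients for $\gamma\le\alpha$. This works because every coefficient indexed by $\gamma<\alpha$ has $|\gamma|<m$ and therefore satisfies the hypotheses, and $\alpha$ itself is the only degree-$m$ index below or equal to $\alpha$.
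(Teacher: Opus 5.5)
Your proof is correct and follows the same route as the paper: the cocycle property of $s_\alpha$ is read off from the degree-$\alpha$ coefficient of $u(gh)^*u(gh)-(g\cdot u(h))^*u(g)^*u(g)(g\cdot u(h))$, after which the recursion and unitarity relations for $c_\alpha=b_\alpha-\tfrac12 s_\alpha$ follow formally. Your version is in fact slightly more complete. You make the truncation modulo the monomials $t^\gamma$ with $\gamma\not\le\alpha$ precise, and you check that $s_\alpha(g)$ is self-adjoint, which the paper uses only implicitly in the identity $c_\alpha^*+c_\alpha=b_\alpha^*+b_\alpha-s_\alpha$.
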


\begin{proof}
Because $d_1b_\alpha=r_\alpha$, the coefficient $b_\alpha$ satisfies \eqref{eq:formalrecursion} in degree $\alpha$. The coefficient of degree $\alpha$ in
\[
u_t(gh)^*u_t(gh)-\bigl(g\cdot u_t(h)\bigr)^*u_t(g)^*u_t(g)\bigl(g\cdot u_t(h)\bigr)
\]
is exactly
$
s_\alpha(gh)-s_\alpha(g)-g\cdot s_\alpha(h).
$
Since the lower-order coefficients already satisfy the deformation and unitarity equations, this expression vanishes, so $s_\alpha\in Z^1(G,\HS(\mathcal H)_{\Ad\pi})$. Therefore $d_1c_\alpha=d_1b_\alpha=r_\alpha$, so \eqref{eq:formalrecursion} still holds in degree $\alpha$. Moreover,
\[
c_\alpha(g)^*+c_\alpha(g)=b_\alpha(g)^*+b_\alpha(g)-s_\alpha(g),
\]
which is exactly \eqref{eq:unitarycoeff} in degree $\alpha$.
\end{proof}

\begin{corollary}\label{cor:formalexistence}
If $H^2(G,\HS(\mathcal H)_{\Ad\pi})=0$, then every skew-adjoint $n$-tuple in $Z^1(G,\HS(\mathcal H)_{\Ad\pi})^n$ extends to a formal unitary deformation.
\end{corollary}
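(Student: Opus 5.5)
The proof is an induction on the total degree $m=|\alpha|$, which builds the coefficients $c_\alpha$ one degree at a time using Lemmas~\ref{lem:obstruction} and~\ref{lem:formalcorrection}.

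Let skew-adjoint cocycles $c_1,\dots,c_n\in Z^1(G,\HS(\mathcal H)_{\Ad\pi})$ be given, and set $c_{e_i}:=c_i$. In degree $m=1$ the quadratic sums in \eqref{eq:formalrecursion} and \eqref{eq:unitarycoeff} are empty. So these two equations say exactly that each $c_{e_i}$ is a cocycle and is skew-adjoint, which holds by hypothesis. This starts the induction.

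For the inductive step, suppose that coefficients $c_\beta$ have been constructed for all $0<|\beta|<m$, and that they satisfy both \eqref{eq:formalrecursion} and \eqref{eq:unitarycoeff}. Fix a multi-index $\alpha$ with $|\alpha|=m$.
\begin{enumerate}[label=(\roman*)]
\item Lemma~\ref{lem:obstruction} shows that the obstruction $r_\alpha$ lies in $Z^2(G,\HS(\mathcal H)_{\Ad\pi})$.
\item Since $H^2(G,\HS(\mathcal H)_{\Ad\pi})=0$, unreduced, there is a cochain $b_\alpha\in C^1$ with $d_1b_\alpha=r_\alpha$.
\item Lemma~\ref{lem:formalcorrection} now yields $c_\alpha=b_\alpha-\frac12 s_\alpha$, which satisfies both equations in degree $\alpha$.
\end{enumerate}

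One point needs checking here. The recursions in degree $\alpha$ involve only the $c_\beta$ with $0<\beta<\alpha$, and all of these have $|\beta|<m$. Therefore the choices made for different $\alpha$ of the same total degree do not interact, and each $\alpha$ with $|\alpha|=m$ can be handled independently.

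Once the induction is complete, put $u_t(g)=1_{\mathcal H}+\sum_{|\alpha|\ge1}c_\alpha(g)t^\alpha$. Expanding \eqref{eq:formal-unitary} coefficientwise gives \eqref{eq:formalrecursion} and \eqref{eq:unitarycoeff} in every degree $\alpha\neq0$, and the degree-$0$ identities are trivial. Hence $u_t$ satisfies \eqref{eq:formal-unitary}, and $\pi_t(g)=u_t(g)\pi(g)$ is a formal unitary deformation whose first-order coefficients are $c_1,\dots,c_n$.

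The one step that needs care is the equivalence between the product identities in \eqref{eq:formal-unitary} and the coefficient identities. The unitarity condition $u_t^*u_t=1$ has degree-$\alpha$ coefficient
\[
c_\alpha^*+c_\alpha+\sum_{0<\beta<\alpha}c_\beta^*c_{\alpha-\beta},
\]
which is exactly \eqref{eq:unitarycoeff}. Likewise, the degree-$\alpha$ coefficient of the homomorphism condition is exactly \eqref{eq:formalrecursion}. Both identities are straightforward expansions. No convergence questions arise, because everything is treated purely formally.
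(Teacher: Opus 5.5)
Your proposal is correct and matches the paper's proof. Both argue by induction on the total degree $m$: Lemma~\ref{lem:obstruction} supplies the $2$-cocycle $r_\alpha$, the vanishing of unreduced $H^2$ supplies a primitive $b_\alpha$, and Lemma~\ref{lem:formalcorrection} corrects it to $c_\alpha=b_\alpha-\frac12 s_\alpha$. Your additional remarks only make explicit what the paper leaves implicit, namely that the choices for different $\alpha$ of the same total degree are independent, and that the coefficientwise identities are equivalent to \eqref{eq:formal-unitary}.
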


\begin{proof}
Start with the prescribed skew-adjoint $1$-cocycles. Suppose coefficients of total degree less than $m$ have been constructed and satisfy both recursion relations. For each $|\alpha|=m$, Lemma~\ref{lem:obstruction} produces a cocycle $r_\alpha\in Z^2(G,\HS(\mathcal H)_{\Ad\pi})$. Since $H^2(G,\HS(\mathcal H)_{\Ad\pi})=0$, choose $b_\alpha\in C^1(G,\HS(\mathcal H)_{\Ad\pi})$ with $d_1b_\alpha=r_\alpha$. Lemma~\ref{lem:formalcorrection} then replaces $b_\alpha$ by a coefficient $c_\alpha$ satisfying both recursion relations in degree $\alpha$. Proceeding inductively yields the full formal unitary deformation.
\end{proof}

\begin{remark}
The obstruction classes produced in Lemma~\ref{lem:obstruction} may be viewed as
Massey products of the initial classes in $H^1(G,\HS(\mathcal H)_{\Ad\pi})$.
In degree two these reduce to ordinary cup products. In higher degree,
after choosing defining systems in lower orders, the cocycles $r_\alpha$
encode the corresponding higher-order Massey products. Thus
Corollary~\ref{cor:formalexistence} may be interpreted as saying that the vanishing of
$H^2(G,\HS(\mathcal H)_{\Ad\pi})$ forces all these Massey-type obstruction classes to vanish,
so every skew-adjoint first-order datum extends to a formal unitary
deformation.
\end{remark}

\section{Analytic unitary deformations}\label{sec:analytic}
Fix a finite symmetric generating set $S$ of $G$ and define
$
\|c\|_S=\max_{s\in S}\|c(s)\|_2$ for $c\in C^1(G,\HS(\mathcal H)_{\Ad\pi}).
$
Because the action on $B$ is isometric, one has
\[
\|x\,g\cdot y\|_2\le \|x\|_2\,\|y\|_2
\qquad (x,y\in \HS(\mathcal H)_{\Ad\pi}).
\]

The purpose of this section is to show that the formal unitary deformations constructed in Corollary~\ref{cor:formalexistence} are actually analytic. To do so, we impose a growth condition on the coefficients of the power series $u_t$ and show that it can be preserved by the recursive construction of solutions to the obstruction equations. Accordingly, we start with skew-adjoint cocycles $c_{e_i}\in Z^1(G,\HS(\mathcal H)_{\Ad\pi})$ for $1\le i\le n$.

\begin{definition}
Let
$
a=(a_1,\dots,a_n),
a_i=\|c_{e_i}\|_S,
a^\alpha=a_1^{\alpha_1}\cdots a_n^{\alpha_n}.
$
We consider the following bound on the coefficients of $u_t$:
\begin{equation}\label{eq:A2} \tag{$\star_\eta$}
\|c_\alpha\|_S\le \eta^{|\alpha|-1}\frac{1}{|\alpha|}\binom{|\alpha|}{\alpha}C_{|\alpha|-1}a^\alpha
\qquad (\alpha\ne 0),
\end{equation}
where
\[
C_m=\frac{1}{m+1}\binom{2m}{m}
\qquad (m\ge 0)
\]
denotes the $m$-th Catalan number and $\eta>0$ is a constant.
\end{definition}

For later use we record the multinomial Vandermonde identity
\begin{equation}\label{eq:multivandermonde}
\sum_{\substack{\beta\le \alpha\\ |\beta|=j}}
\binom{j}{\beta}\binom{|\alpha|-j}{\alpha-\beta}
=\binom{|\alpha|}{\alpha}
\qquad (0\le j\le |\alpha|).
\end{equation}

\begin{lemma}\label{lem:multicatalan}
Let $\alpha\in \N^n$ with $|\alpha|=m\ge 2$. Then
\[
\sum_{0<\beta<\alpha}
\frac{1}{|\beta|}\binom{|\beta|}{\beta}C_{|\beta|-1}
\frac{1}{|\alpha|-|\beta|}\binom{|\alpha|-|\beta|}{\alpha-\beta}C_{|\alpha|-|\beta|-1}
\le 2\,\frac{1}{m}\binom{m}{\alpha}C_{m-1}.
\]
\end{lemma}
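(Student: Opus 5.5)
Group the terms by $j=|\beta|$, which ranges over $1\le j\le m-1$. The Catalan factors depend only on $j$ and $m-j$, so the left-hand side equals
\[
\sum_{j=1}^{m-1}\frac{C_{j-1}C_{m-j-1}}{j(m-j)}
\sum_{\substack{\beta\le\alpha\\ |\beta|=j}}\binom{j}{\beta}\binom{m-j}{\alpha-\beta}.
\]
For $1\le j\le m-1$ the condition $|\beta|=j$ already forces $\beta\neq0$ and $\beta\neq\alpha$, so the constraint $0<\beta<\alpha$ is automatic. The multinomial Vandermonde identity \eqref{eq:multivandermonde} therefore collapses the inner sum to $\binom{m}{\alpha}$. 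This factor also appears on the right-hand side and cancels, so the statement reduces to the one-variable inequality
\[
\sum_{j=1}^{m-1}\frac{C_{j-1}C_{m-j-1}}{j(m-j)}\le \frac{2}{m}\,C_{m-1}.
\]

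To prove this, use the partial fractions identity $\frac{1}{j(m-j)}=\frac1m\bigl(\frac1j+\frac1{m-j}\bigr)$. The sum is symmetric under $j\mapsto m-j$, so it equals $\frac{2}{m}\sum_{j=1}^{m-1}\frac{C_{j-1}C_{m-j-1}}{j}$. Since $j\ge1$, this is at most $\frac{2}{m}\sum_{j=1}^{m-1}C_{j-1}C_{m-1-j}$. By the Catalan recursion $C_{k+1}=\sum_{i=0}^{k}C_iC_{k-i}$ with $k=m-2$, the last sum is exactly $C_{m-1}$. This gives the claim.

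No step here is a real obstacle. The only point needing care is checking that the index range of $\beta$ matches the Vandermonde identity exactly for every $j$ strictly between $0$ and $m$. The estimate $1/j\le1$ is crude, so the constant $2$ is not sharp, but it is all the statement asks for.
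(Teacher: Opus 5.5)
Your proof is correct and follows the paper's argument: you group by $j=|\beta|$, collapse the inner sum to $\binom{m}{\alpha}$ with the multinomial Vandermonde identity, and finish with the Catalan recursion. The only difference is cosmetic: the paper bounds the weight pointwise via $j(m-j)\ge m/2$, whereas you reach the same factor $\frac{2}{m}$ through partial fractions, the $j\mapsto m-j$ symmetry, and $1/j\le 1$.
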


\begin{proof}
For fixed $j\in\{1,\dots,m-1\}$, the conditions $0<\beta<\alpha$ and $|\beta|=j$ are equivalent to $\beta\le \alpha$ and $|\beta|=j$: the cases $\beta=0$ and $\beta=\alpha$ are impossible because $j$ is strictly between $0$ and $m$. Hence, after grouping the sum by $j=|\beta|$ and using \eqref{eq:multivandermonde}, we obtain
\begin{align*}
&\sum_{0<\beta<\alpha}
\frac{1}{|\beta|}\binom{|\beta|}{\beta}C_{|\beta|-1}
\frac{1}{|\alpha|-|\beta|}\binom{|\alpha|-|\beta|}{\alpha-\beta}C_{|\alpha|-|\beta|-1}
\\
&\qquad=
\sum_{j=1}^{m-1}
\frac{C_{j-1}C_{m-j-1}}{j(m-j)}
\sum_{\substack{\beta\le \alpha\\ |\beta|=j}}
\binom{j}{\beta}\binom{m-j}{\alpha-\beta}
\\
&\qquad=
\binom{m}{\alpha}
\sum_{j=1}^{m-1}\frac{C_{j-1}C_{m-j-1}}{j(m-j)}.
\end{align*}
For $1\le j\le m-1$ one has $j(m-j)\ge m/2$, hence
\[
\frac{1}{j(m-j)}\le \frac{2}{m}.
\]
Therefore
\[
\sum_{j=1}^{m-1}\frac{C_{j-1}C_{m-j-1}}{j(m-j)}
\le \frac{2}{m}\sum_{j=1}^{m-1}C_{j-1}C_{m-j-1}
=\frac{2}{m}C_{m-1},
\]
where the last equality is the Catalan recursion. This proves the claim.
\end{proof}

\begin{lemma}\label{lem:finitepropagation}
Let $E\subset G$ be finite, and choose for each $g\in E$ a word in the generators $S$ representing $g$. Assume that the coefficients satisfy \eqref{eq:formalrecursion} and the bound \eqref{eq:A2}. Then there exists a constant $L\ge 1$, depending only on $E$ and the chosen words, such that
\begin{equation}\label{eq:finitepropagation}
\max_{g\in E}\|c_\alpha(g)\|_2
\le L \eta^{|\alpha|-1}\frac{1}{|\alpha|}\binom{|\alpha|}{\alpha}C_{|\alpha|-1}a^\alpha
\qquad (\alpha\ne 0).
\end{equation}
\end{lemma}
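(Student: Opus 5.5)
Fix $g\in E$ with its chosen word $g=s_1\cdots s_r$ ($s_i\in S$). The plan is to expand $u_t(g)=u_t(s_1)\,(s_1\cdot u_t(s_2))\cdots(s_1\cdots s_{r-1}\cdot u_t(s_r))$, which holds coefficientwise by iterating \eqref{eq:formalrecursion}. Taking the degree-$\alpha$ coefficient gives
\[
c_\alpha(g)=\sum_{k=1}^{r}\ \sum_{\substack{i_1<\dots<i_k\\ \beta^{(1)}+\dots+\beta^{(k)}=\alpha,\ \beta^{(j)}\neq 0}} (w_{i_1}\cdot c_{\beta^{(1)}}(s_{i_1}))\cdots (w_{i_k}\cdot c_{\beta^{(k)}}(s_{i_k})),
\]
where $w_i=s_1\cdots s_{i-1}$. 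Since the action is isometric and $\|xy\|_2\le\|x\|_2\|y\|_2$, each summand has norm at most $\prod_j\|c_{\beta^{(j)}}\|_S$. We then apply \eqref{eq:A2} to each factor.

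Abbreviate $\Phi(\alpha)=\frac1{|\alpha|}\binom{|\alpha|}{\alpha}C_{|\alpha|-1}$. By \eqref{eq:A2}, a $k$-fold product is bounded by $\eta^{|\alpha|-k}a^\alpha\prod_j\Phi(\beta^{(j)})$. The key combinatorial input is a $k$-fold version of Lemma~\ref{lem:multicatalan}:
\[
\sum_{\beta^{(1)}+\dots+\beta^{(k)}=\alpha}\ \prod_j\Phi(\beta^{(j)})\le 2^{k-1}\Phi(\alpha).
\]
We prove this by induction on $k$, splitting off the last factor and applying Lemma~\ref{lem:multicatalan} (the terms with $\beta=0$ or $\beta=\alpha$ do not occur, since all parts are nonzero). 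There are $\binom{r}{k}$ choices of positions, so the degree-$\alpha$ contribution is bounded by
\[
\sum_{k=1}^{\min(r,|\alpha|)}\binom rk 2^{k-1}\eta^{|\alpha|-k}\Phi(\alpha)a^\alpha
=\eta^{|\alpha|-1}\Phi(\alpha)a^\alpha\sum_{k}\binom rk 2^{k-1}\eta^{1-k}.
\]

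The main obstacle is the factor $\eta^{1-k}$. If $\eta\ge 1$, or more generally if $\eta$ is fixed in advance, then this factor is at most $\max(1,\eta^{1-r})$, and hence
\[
L=\max_{g\in E}\,3^{r(g)}\max\bigl(1,\eta^{1-r(g)}\bigr)
\]
works. This constant depends only on $E$, the words, and $\eta$. I would note that $\eta$ is a fixed parameter of the setup (presumably $\eta\ge1$ is chosen later), so the dependence on $\eta$ is harmless; if needed one can simply assume $\eta\ge 1$, giving $L=3^{\max r}$.

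Finally, we must check that the iterated expansion is legitimate as formal series. Each coefficient is a finite sum of products of Hilbert--Schmidt operators with at most $r$ factors, so everything is well defined. The case $g=e$ is handled by taking the empty word, which gives $c_\alpha(e)=0$.
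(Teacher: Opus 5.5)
Your proof is correct. It is essentially the paper's argument unrolled, rather than a different idea.

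The paper inducts on word length. Writing $g=hs$, it applies \eqref{eq:formalrecursion} once and bounds the quadratic term by Lemma~\ref{lem:multicatalan}. This gives $L_g=1+3L_h$ with $L_s=1$, i.e.\ $L=(3^r-1)/2$ for a word of length $r$.

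You instead expand $u_t(g)$ over the whole word. You then move the induction into the combinatorics through the $k$-fold bound $\sum\prod_j\Phi(\beta^{(j)})\le 2^{k-1}\Phi(\alpha)$. For $\eta\ge1$ your constant $\sum_{k\ge1}\binom rk 2^{k-1}=(3^r-1)/2$ is exactly the paper's. So the two routes buy the same estimate: the paper needs only one application of Lemma~\ref{lem:multicatalan} per letter, while yours gives the closed form directly.

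Your explicit treatment of $\eta$ is a genuine plus. The paper's step $L_h\eta^{|\alpha|-2}\cdot 2A_\alpha\le 2L_h\eta^{|\alpha|-1}A_\alpha$ silently uses $\eta\ge1$. This matters: in Lemma~\ref{lem:onestep} the constant $L$ must be independent of $\eta$, otherwise the choice $K=\max\{1,2M_1+1\}$ with $M_1=2\kappa L^2$ would be circular. You should therefore state $\eta\ge1$ as a hypothesis rather than carry the factor $\max(1,\eta^{1-r})$.

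Conversely, the paper's stepwise induction applies verbatim in the truncated situation where the lemma is actually used, namely when only the $c_\beta$ with $|\beta|<m$ exist. Your product identity for $u_t(g)$ should be read modulo monomials of total degree $\ge m$. This is harmless, since the degree-$\alpha$ coefficient involves only lower-degree data.
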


\begin{proof}
Write
\[
A_\alpha=\frac{1}{|\alpha|}\binom{|\alpha|}{\alpha}C_{|\alpha|-1}a^\alpha.
\]
It suffices to prove that for each chosen word $g=s_1\cdots s_\ell$ there is a constant $L_g\ge 1$, depending only on that word, such that
\[
\|c_\alpha(g)\|_2\le L_g \eta^{|\alpha|-1}A_\alpha
\qquad (\alpha\ne 0).
\]
We argue by induction on the word length $\ell$.

If $\ell=1$, then $g=s_1\in S$, so the claim follows directly from the definition of $\|c_\alpha\|_S$ with $L_g=1$.

Assume now that $\ell\ge 2$, and write $g=hs$ where $h=s_1\cdots s_{\ell-1}$ and $s=s_\ell\in S$. By the induction hypothesis there is a constant $L_h\ge 1$ such that
\[
\|c_\beta(h)\|_2\le L_h \eta^{|\beta|-1}A_\beta
\qquad (\beta\ne 0).
\]
Using \eqref{eq:formalrecursion}, the isometricity of the action, the definition of $\|c_\alpha\|_S$, and Lemma~\ref{lem:multicatalan}, we obtain for $\alpha\ne 0$
\begin{align*}
\|c_\alpha(g)\|_2
&\le \|c_\alpha(h)\|_2+\|h\cdot c_\alpha(s)\|_2
+\sum_{0<\beta<\alpha}\|c_\beta(h)\|_2\,\|h\cdot c_{\alpha-\beta}(s)\|_2\\
&\le L_h \eta^{|\alpha|-1}A_\alpha+\eta^{|\alpha|-1}A_\alpha
+L_h \eta^{|\alpha|-2}a^\alpha
\\
&\qquad
\sum_{0<\beta<\alpha}
\frac{1}{|\beta|}\binom{|\beta|}{\beta}C_{|\beta|-1}
\frac{1}{|\alpha|-|\beta|}\binom{|\alpha|-|\beta|}{\alpha-\beta}C_{|\alpha|-|\beta|-1}\\
&\le (L_h+1+2L_h)\eta^{|\alpha|-1}A_\alpha.
\end{align*}
Hence the claim holds for $g$ with $L_g=1+3L_h$.

Taking the maximum of the finitely many constants $L_g$ over $g\in E$ proves the lemma.
\end{proof}

\begin{lemma}\label{lem:onestep}
Assume that $H^2(G,\HS(\mathcal H)_{\Ad\pi})=0$. Then there exists a constant $K\ge 1$ with the following property. Let $\eta\ge K$, and assume that coefficients of total degree less than $m$ satisfy \eqref{eq:formalrecursion}, \eqref{eq:unitarycoeff}, and the bound \eqref{eq:A2}. Then for every multi-index $\alpha$ with $|\alpha|=m$ there exists a coefficient $c_\alpha\in C^1(G,\HS(\mathcal H)_{\Ad\pi})$ such that the deformation and unitarity equations continue to hold in degree $\alpha$ and the bound \eqref{eq:A2} holds for $\alpha$.
\end{lemma}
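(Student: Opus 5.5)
The plan is to combine the open-mapping estimate of Lemma~\ref{lem:openmapping} with the unitarity correction of Lemma~\ref{lem:formalcorrection}. The Catalan bookkeeping of Lemma~\ref{lem:multicatalan}, together with the propagation Lemma~\ref{lem:finitepropagation}, will absorb every constant into one extra power of $\eta$. Throughout write $A_\alpha=\frac{1}{|\alpha|}\binom{|\alpha|}{\alpha}C_{|\alpha|-1}a^\alpha$. The target is $\|c_\alpha\|_S\le \eta^{m-1}A_\alpha$, while the inductive hypothesis gives $\|c_\beta(x)\|_2\le L\eta^{|\beta|-1}A_\beta$ on finite sets of group elements.

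\textbf{Step 1: fixing the constants.} Apply Lemma~\ref{lem:openmapping} with $n=1$, $V=\HS(\mathcal H)_{\Ad\pi}$ and $E=S$. This gives a finite $F\subset G^2$ and $\kappa>0$ such that every $z\in Z^2$ has a primitive $b$ with $\|b\|_S\le\kappa\max_{(g,h)\in F}\|z(g,h)\|_2$. Let $E'\subset G$ be the finite set of all first and second coordinates of elements of $F$, together with $S$. Fix words for the elements of $E'$, and let $L$ be the constant of Lemma~\ref{lem:finitepropagation} for $E'$. Since $L$ depends only on $E'$ and the words, not on $\eta$ or $m$, both $\kappa$ and $L$ are fixed before $K$ is chosen.

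There is one subtlety. Lemma~\ref{lem:finitepropagation} is stated for all $\alpha$, but its proof is an induction on word length in which degree $\alpha$ uses only degrees $\le\alpha$. It therefore applies verbatim to all degrees $<m$, which is all I need.

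\textbf{Step 2: bounding the obstruction and its primitive.} For $(g,h)\in F$ the isometry of the action gives
\[
\|r_\alpha(g,h)\|_2\le\sum_{0<\beta<\alpha}L^2\eta^{|\beta|-1}\eta^{m-|\beta|-1}A_\beta A_{\alpha-\beta}\le 2L^2\eta^{m-2}A_\alpha,
\]
where the last inequality uses $a^\beta a^{\alpha-\beta}=a^\alpha$ and Lemma~\ref{lem:multicatalan}. The case $m=1$ is trivial, and $m\ge 2$ is the relevant one. By Lemma~\ref{lem:obstruction}, $r_\alpha$ is a cocycle, so Step~1 provides $b_\alpha$ with $d_1b_\alpha=r_\alpha$ and $\|b_\alpha\|_S\le 2\kappa L^2\eta^{m-2}A_\alpha$.

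\textbf{Step 3: the unitarity correction.} Set $c_\alpha=b_\alpha-\frac12 s_\alpha$ as in Lemma~\ref{lem:formalcorrection}. That lemma already guarantees both recursion relations in degree $\alpha$, so only the bound remains. For $s\in S$, using $\|x^*y\|_2\le\|x\|_2\|y\|_2$ and the bound $\|c_\beta\|_S\le\eta^{|\beta|-1}A_\beta$ for lower degrees, I get
\[
\|s_\alpha(s)\|_2\le 2\|b_\alpha\|_S+\sum_{0<\beta<\alpha}\eta^{m-2}A_\beta A_{\alpha-\beta}\le(4\kappa L^2+2)\eta^{m-2}A_\alpha.
\]
Since $s_\alpha$ is only evaluated on $S$, no propagation is needed here. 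Hence
\[
\|c_\alpha\|_S\le(4\kappa L^2+1)\eta^{m-2}A_\alpha .
\]
Choosing $K=4\kappa L^2+1$, the condition $\eta\ge K$ gives $\|c_\alpha\|_S\le\eta^{m-1}A_\alpha$, which is \eqref{eq:A2} in degree $\alpha$.

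\textbf{Main obstacle.} The delicate point is ensuring that $K$ is independent of $\eta$ and $m$. This works because Lemma~\ref{lem:openmapping} controls $\|b_\alpha\|_S$ only through the values of $r_\alpha$ on the fixed finite set $F$, and the group elements appearing in $F$ may lie outside $S$. Lemma~\ref{lem:finitepropagation} bridges this gap with a constant $L$ independent of $\eta$. The quadratic terms then lose exactly one factor of $\eta$, which is recovered by the choice $\eta\ge K$.
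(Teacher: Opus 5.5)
Your proposal is correct and follows the paper's proof essentially step for step. Like the paper, you apply Lemma~\ref{lem:openmapping} with $E=S$, use Lemma~\ref{lem:finitepropagation} on the coordinates of $F$ together with Lemma~\ref{lem:multicatalan} to bound $r_\alpha$ and $b_\alpha$, and then make the unitarity correction $c_\alpha=b_\alpha-\tfrac12 s_\alpha$, arriving at the same constant $K=4\kappa L^2+1=2M_1+1$. Your remark that Lemma~\ref{lem:finitepropagation} is only needed in degrees below $m$ makes explicit a point the paper uses silently; the same goes for its reliance on $\eta\ge 1$, which $K\ge 1$ guarantees.
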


\begin{proof}
Write
\[
A_\alpha=\frac{1}{|\alpha|}\binom{|\alpha|}{\alpha}C_{|\alpha|-1}a^\alpha.
\]
Fix $|\alpha|=m$. By Lemma~\ref{lem:obstruction}, the obstruction $r_\alpha$ is a $2$-cocycle. Since $H^2(G,\HS(\mathcal H)_{\Ad\pi})=0$, Lemma~\ref{lem:openmapping} applied with $n=1$ and $E=S$ yields a finite set $F\subset G^2$, a constant $\kappa>0$, and a cochain $b_\alpha\in C^1(G,\HS(\mathcal H)_{\Ad\pi})$ such that
\[
d_1b_\alpha=r_\alpha,
\qquad
\|b_\alpha\|_S\le \kappa\max_{(g,h)\in F}\|r_\alpha(g,h)\|_2.
\]

To estimate $b_\alpha$ on the generators, we use this choice of $F$ and $\kappa$:
\[
\|b_\alpha\|_S\le \kappa\max_{(g,h)\in F}\|r_\alpha(g,h)\|_2.
\]
Let $E_F$ be the finite set of all coordinates appearing in $F$. Applying Lemma~\ref{lem:finitepropagation} to $E_F$, with its associated constant denoted by $L$, and using the definition of $r_\alpha$, we obtain
\begin{align*}
\max_{(g,h)\in F}\|r_\alpha(g,h)\|_2
&\le \sum_{0<\beta<\alpha}
\max_{g\in E_F}\|c_\beta(g)\|_2\,
\max_{h\in E_F}\|c_{\alpha-\beta}(h)\|_2\\
&\le L^2 \eta^{|\alpha|-2}a^\alpha
\sum_{0<\beta<\alpha}
\frac{1}{|\beta|}\binom{|\beta|}{\beta}C_{|\beta|-1}
\\
&\phantom{\le L^2 \eta^{|\alpha|-2}a^\alpha\sum_{0<\beta<\alpha}}
\cdot
\frac{1}{|\alpha|-|\beta|}\binom{|\alpha|-|\beta|}{\alpha-\beta}C_{|\alpha|-|\beta|-1}.
\end{align*}
Lemma~\ref{lem:multicatalan} bounds the final sum by
$
2A_\alpha.
$
Therefore
\[
\|b_\alpha\|_S\le 2\kappa L^2 \eta^{|\alpha|-2}A_\alpha.
\]
Set $M_1=2\kappa L^2$.
Now define the unitary defect
\[
s_\alpha(g)=b_\alpha(g)^*+b_\alpha(g)+\sum_{0<\beta<\alpha}c_\beta(g)^*c_{\alpha-\beta}(g).
\]
The coefficient of degree $\alpha$ in
\[
u_t(gh)^*u_t(gh)-\bigl(g\cdot u_t(h)\bigr)^*u_t(g)^*u_t(g)\bigl(g\cdot u_t(h)\bigr)
\]
is exactly
$
s_\alpha(gh)-s_\alpha(g)-g\cdot s_\alpha(h).
$
Since the lower-order coefficients already define a unitary deformation, the whole expression vanishes, so $s_\alpha\in Z^1(G,\HS(\mathcal H)_{\Ad\pi})$.

For $g\in S$, the induction hypothesis and Lemma~\ref{lem:multicatalan} give
\begin{align*}
\|s_\alpha(g)\|_2
&\le 2\|b_\alpha(g)\|_2+\sum_{0<\beta<\alpha}\|c_\beta(g)\|_2\,\|c_{\alpha-\beta}(g)\|_2\\
&\le 2M_1 \eta^{|\alpha|-2}A_\alpha
+\eta^{|\alpha|-2}a^\alpha
\sum_{0<\beta<\alpha}
\frac{1}{|\beta|}\binom{|\beta|}{\beta}C_{|\beta|-1}
\\
&\phantom{\le 2M_1 \eta^{|\alpha|-2}A_\alpha
+\eta^{|\alpha|-2}a^\alpha\sum_{0<\beta<\alpha}}
\frac{1}{|\alpha|-|\beta|}\binom{|\alpha|-|\beta|}{\alpha-\beta}C_{|\alpha|-|\beta|-1}\\
&\le (2M_1+2)\eta^{|\alpha|-2}A_\alpha.
\end{align*}
Hence
$
\|s_\alpha\|_S\le (2M_1+2)\eta^{|\alpha|-2}A_\alpha.
$

Set
$
c_\alpha=b_\alpha-\frac12 s_\alpha.
$
Because $s_\alpha$ is a $1$-cocycle, one still has $d_1c_\alpha=r_\alpha$, so the multiplicativity equation is preserved. On the other hand,
$
c_\alpha(g)^*+c_\alpha(g)=b_\alpha(g)^*+b_\alpha(g)-s_\alpha(g),
$
so \eqref{eq:unitarycoeff} now holds in degree $\alpha$. Therefore
\[
\|c_\alpha\|_S\le \|b_\alpha\|_S+\frac12\|s_\alpha\|_S\le (2M_1+1)\eta^{|\alpha|-2}A_\alpha.
\]
Set
$
K=\max\{1,2M_1+1\}.
$
Then for every $\eta\ge K$ we obtain
$
\|c_\alpha\|_S\le \eta^{|\alpha|-1}A_\alpha,
$
which is exactly the required estimate.
\end{proof}

\begin{theorem}\label{thm:unitaryanalytic} Let $G$ be a finitely generated group.
Assume that $c_{e_1},\dots,c_{e_n}\in Z^1(G,\HS(\mathcal H)_{\Ad\pi})$ are skew-adjoint and that $H^2(G,\HS(\mathcal H)_{\Ad\pi})=0$. Then they integrate to an analytic unitary deformation of $\pi$, i.e. there exist a constant $\eta\ge 1$ and a formal unitary deformation
\[
\pi_t(g)=\Bigl(1_{\mathcal H}+\sum_{|\alpha|\ge 1}c_\alpha(g)t^\alpha\Bigr)\pi(g)
\]
whose coefficients satisfy the bound \eqref{eq:A2}. Consequently the series converges absolutely on the open set
\[
\Bigl\{t\in {\mathbb R}^n: a_1|t_1|+\cdots+a_n|t_n|<\frac1{4\eta}\Bigr\}.
\]
The resulting limit defines an analytic unitary deformation.
\end{theorem}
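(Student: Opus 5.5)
The plan has three parts: build the coefficients inductively with the estimate \eqref{eq:A2} maintained, deduce absolute convergence from the growth of the Catalan and multinomial factors, and pass the formal identities to the limit. Fix $K$ as in Lemma~\ref{lem:onestep}, which depends only on $H^2(G,\HS(\mathcal H)_{\Ad\pi})=0$ and the generating set $S$, and take $\eta=\max\{K,1\}$.

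\textbf{Construction of the coefficients.} In degree one, $c_{e_i}$ are the given skew-adjoint cocycles. For $|\alpha|=1$ the bound \eqref{eq:A2} reads $\|c_{e_i}\|_S\le a_i$, which holds by the definition of $a_i$. The recursions \eqref{eq:formalrecursion} and \eqref{eq:unitarycoeff} hold in degree one by the cocycle property and skew-adjointness. Then I will induct on $m=|\alpha|$. At each stage all $\alpha$ with $|\alpha|=m$ are handled simultaneously by Lemma~\ref{lem:onestep}, since each $r_\alpha$ involves only lower-degree coefficients. This gives a formal unitary deformation in the sense of Definition~\ref{def:formal-unitary-deformation} with every coefficient satisfying \eqref{eq:A2}. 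The existence part is therefore just Corollary~\ref{cor:formalexistence} refined by Lemma~\ref{lem:onestep}.

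\textbf{Absolute convergence.} Fix $g\in G$ and apply Lemma~\ref{lem:finitepropagation} with $E=\{g\}$. This gives
\[
\|c_\alpha(g)\|_2\le L\eta^{|\alpha|-1}\tfrac{1}{|\alpha|}\tbinom{|\alpha|}{\alpha}C_{|\alpha|-1}a^\alpha .
\]
Using $C_{m-1}\le 4^{m-1}$ and grouping by $|\alpha|=m$, the multinomial theorem yields
\[
\sum_{|\alpha|=m}\|c_\alpha(g)\|_2|t^\alpha|\le \frac{L}{4\eta m}\bigl(4\eta(a_1|t_1|+\dots+a_n|t_n|)\bigr)^m .
\]
This is summable precisely when $a_1|t_1|+\dots+a_n|t_n|<1/(4\eta)$. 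So on the stated open set the series $u_t(g)$ converges absolutely in $\HS$-norm, hence in operator norm, and locally uniformly. The limit is real-analytic in $t$ and lies in $1_{\mathcal H}+\HS(\mathcal H)$.

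\textbf{Passing to the limit.} The formal identities \eqref{eq:formal-unitary} must be turned into genuine ones. The key observation is that multiplication $B(\mathcal H)\times B(\mathcal H)\to B(\mathcal H)$ is continuous. Moreover, the Cauchy product of two absolutely convergent multi-variable power series converges absolutely to the product of the sums, and conjugation by $\pi(g)$ and taking adjoints are isometric. Comparing coefficients then gives
\[
u_t(gh)=u_t(g)\bigl(g\cdot u_t(h)\bigr)\quad\text{and}\quad u_t(g)^*u_t(g)=1_{\mathcal H}
\]
for all $t$ in the domain, since each $t^\alpha$-coefficient of the left and right sides agrees by \eqref{eq:formalrecursion} and \eqref{eq:unitarycoeff}. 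Here the domain is independent of $g$: the radius bound involves only $\eta$ and $a$, while $L$ affects only the constant.

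Finally, $u_t(g)$ is an isometry, and $u_t(g)$ is invertible. Invertibility follows because $u_t(g)u_t(g^{-1})$ relates to $u_t(e)$, and $u_t(e)=1_{\mathcal H}$: from \eqref{eq:formalrecursion} with $g=h=e$ one gets inductively $c_\alpha(e)=0$. Hence $u_t(g)$ is unitary and $\pi_t$ is a unitary representation. Its first derivative at $0$ in direction $t_i$ is $c_{e_i}(g)\pi(g)$.

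The main obstacle I expect is bookkeeping rather than a conceptual difficulty: checking that the constant $K$ in Lemma~\ref{lem:onestep} is genuinely uniform in $m$ and $\alpha$. It is, since $F$, $\kappa$ and $L$ come from Lemmas~\ref{lem:openmapping} and~\ref{lem:finitepropagation} with fixed finite sets. The other point to verify is that Lemma~\ref{lem:finitepropagation} may be applied to the partially constructed family, because its proof only uses lower-degree data.
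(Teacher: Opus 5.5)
Your proposal is correct and follows the paper's proof. You build the coefficients inductively via Lemma~\ref{lem:onestep} with a uniform $\eta\ge K$, then use the multinomial theorem and Catalan growth to get absolute convergence on $a_1|t_1|+\cdots+a_n|t_n|<1/(4\eta)$, and pass the coefficientwise identities to the limit. Your crude bound $C_{m-1}\le 4^{m-1}$ replaces the paper's Catalan generating function without changing the domain. Invoking Lemma~\ref{lem:finitepropagation} for arbitrary $g$, the Cauchy-product argument in $B(\mathcal H)$, and surjectivity of $u_t(g)$ via $c_\alpha(e)=0$ simply make explicit steps the paper leaves implicit.
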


\begin{proof}
Let $K$ be the constant from Lemma~\ref{lem:onestep}, and choose $\eta\ge K$. For $|\alpha|=1$ the prescribed cocycles satisfy the bound \eqref{eq:A2} because
$
\|c_{e_i}\|_S=a_i,
$  
and here $a^{e_i}=a_i$ while the zeroth Catalan number equals $1$. Thus
\[
\|c_{e_i}\|_S=\eta^{0}\frac{1}{1}\binom{1}{e_i}C_{0}a^{e_i}.
\]
Assume inductively that coefficients of total degree less than $m$ have been constructed and satisfy the deformation equations, the unitary equations, and the bound \eqref{eq:A2} with this fixed constant $\eta$. Lemma~\ref{lem:onestep} then produces the coefficients of total degree $m$ with the same properties and the same constant $\eta$. This yields coefficients satisfying the deformation equations, the unitary equations, and the bound \eqref{eq:A2} in every degree.

Summing over all multi-indices of total degree $m$ and using the multinomial theorem gives
\[
\sum_{|\alpha|=m}\binom{m}{\alpha}a^\alpha|t|^\alpha
=(a_1|t_1|+\cdots+a_n|t_n|)^m.
\]
Hence
\[
\sum_{|\alpha|\ge 1}\|c_\alpha\|_S|t|^\alpha
\le \frac{1}{\eta}\sum_{m\ge 1}\frac{1}{m}C_{m-1}\bigl(\eta(a_1|t_1|+\cdots+a_n|t_n|)\bigr)^m.
\]
The generating series
\[
\sum_{m\ge 1}C_{m-1}r^m=\frac{1-\sqrt{1-4r}}{2}
\]
has radius of convergence $1/4$, so the right-hand side converges whenever $\eta(a_1|t_1|+\cdots+a_n|t_n|)<1/4$. This is exactly the simplex condition in the statement. This shows convergence on the generators; the corresponding bound holds for all $g\in G$ by multiplicativity. Therefore the series converges absolutely and uniformly on compact subsets of the open simplex, and the limit is a continuous function of $t$.
Since the defining identities hold coefficientwise, the limit is an analytic unitary deformation.
\end{proof}

The interesting deformations obtained from Theorem~\ref{thm:unitaryanalytic} are those for which the initial cocycles $c_{e_i}$ are not inner. Let us briefly discuss the inner case when $n=1$, which is somewhat degenerate but still worth understanding. In this case the deformation can be integrated explicitly, without using the vanishing of $H^2(G,\HS(\mathcal H)_{\Ad\pi})$.

\begin{proposition}
\label{prop:inner}
Let $\pi\colon G\to U(\mathcal H)$ be a unitary representation. Suppose that
$c\in Z^1(G,\HS(\mathcal H))$ is skew-adjoint and inner, so that there exists
$X\in \HS(\mathcal H)$ with $X^*=-X$ and
\[
c(g)=X-g\cdot X=X-\pi(g)X\pi(g)^{-1}
\qquad (g\in G).
\]
Then $c$ integrates to an analytic one-parameter unitary deformation of $\pi$.
More precisely, if
$
U_t=e^{tX},
$
then $U_t$ is unitary for every $t\in \mathbb R$, and
$
\pi_t(g)=U_t\,\pi(g)\,U_t^{-1}
$
defines an analytic unitary deformation of $\pi$. Moreover,
$
U_t- 1_{\mathcal H} \in \HS(\mathcal H),
$
and the linear term of the deformation is the given cocycle $c$.
\end{proposition}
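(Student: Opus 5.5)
The plan is to verify the claims in the statement directly, using only functional calculus and the Hilbert--Schmidt ideal property; no cohomological input is needed. Since $X\in\HS(\mathcal H)$ is bounded and skew-adjoint, $U_t=e^{tX}$ is defined by the norm-convergent exponential series. It satisfies $U_t^*=e^{tX^*}=e^{-tX}=U_t^{-1}$, so $U_t$ is unitary for all real $t$, and $t\mapsto U_t$ is real-analytic (indeed entire) in operator norm. Consequently $\pi_t(g)=U_t\pi(g)U_t^{-1}$ is a unitary representation for each $t$, being a conjugate of $\pi$ by a unitary. It depends analytically on $t$ and satisfies $\pi_0=\pi$.

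To place this in the framework of Definition~\ref{def:formal-unitary-deformation}, I will write $\pi_t(g)=u_t(g)\pi(g)$ with
\[
u_t(g)=U_t\,\pi(g)U_t^{-1}\pi(g)^{-1}=e^{tX}\,\bigl(g\cdot e^{-tX}\bigr)=e^{tX}e^{-t\,g\cdot X},
\]
using that $g\cdot e^{-tX}=e^{-t\,g\cdot X}$ because $\Ad_\pi(g)$ is a $*$-automorphism of $\BH$. The cocycle identity $u_t(gh)=u_t(g)\,(g\cdot u_t(h))$ then either telescopes directly or follows from the homomorphism property of $\pi_t$. Unitarity of $u_t(g)$ is clear, since it is a product of unitaries.

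For the Hilbert--Schmidt condition, I will note that $e^{tX}-1=\sum_{k\ge1}t^kX^k/k!$. Each $X^k$ lies in $\HS$ and satisfies $\|X^k\|_2\le\|X\|^{k-1}\|X\|_2$ by the ideal estimate, so the series converges absolutely in $\|\cdot\|_2$, and $U_t-1\in\HS(\mathcal H)$ with $\|U_t-1\|_2\le \|X\|_2 e^{|t|\|X\|}$. The same argument applied to $g\cdot X$ shows that $u_t(g)-1\in\HS$, via the identity
\[
u_t(g)-1=(e^{tX}-1)e^{-t\,g\cdot X}+(e^{-t\,g\cdot X}-1).
\]
Expanding the product of the two exponential series and applying the ideal estimate shows that the coefficients $c_k(g)$ of $t^k$ lie in $\HS$. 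The coefficient series converges absolutely in $\HS$-norm for all $t$, so the deformation is analytic in the sense of Theorem~\ref{thm:unitaryanalytic}.

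Finally, the linear term is the coefficient of $t$ in $e^{tX}e^{-t\,g\cdot X}$, which is $X-g\cdot X=c(g)$, as required. No step here is a genuine obstacle. The only point needing care is the bookkeeping showing that all higher coefficients $c_k(g)=\sum_{i+j=k}X^i(-g\cdot X)^j/(i!\,j!)$ with $k\ge1$ are Hilbert--Schmidt: each such term contains at least one factor from $\HS$, and the ideal property then applies.
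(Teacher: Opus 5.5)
Your proposal is correct and follows the paper's proof essentially step for step: the norm-convergent exponential, unitarity from $X^*=-X$, the factorization $u_t(g)=e^{tX}\,(g\cdot e^{-tX})$, and reading off the linear term $X-g\cdot X$; you also add detail the paper skips, namely $\HS$-norm convergence via $\|X^k\|_2\le\|X\|^{k-1}\|X\|_2$ and membership of all higher coefficients $c_k(g)$ in $\HS$. One small slip: summing your termwise estimate gives $\|U_t-1\|_2\le\sum_{k\ge1}|t|^k\|X\|^{k-1}\|X\|_2/k!\le |t|\,\|X\|_2\,e^{|t|\|X\|}$, so your stated bound is missing a factor $|t|$ and fails as written (e.g.\ for rank-one $X$ of small norm and large $|t|$), though nothing in the argument depends on it.
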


\begin{proof}
Since $X\in \HS(\mathcal H)\subset B(\mathcal H)$ is bounded, the exponential series
\[
U_t=e^{tX}=\sum_{m\ge 0}\frac{t^mX^m}{m!}
\]
converges in operator norm for all $t\in\mathbb C$. Because $X^*=-X$, the
operator $U_t$ is unitary for real $t$. Also,
\[
K_t:=U_t-1_{\mathcal H}=\sum_{m\ge 1}\frac{t^mX^m}{m!}\in \HS(\mathcal H),
\]
since $\HS(\mathcal H)$ is a two-sided ideal in $B(\mathcal H)$ and every power $X^m$ with
$m\ge 1$ lies in $\HS(\mathcal H)$.
Now define
$
\pi_t(g)=U_t\pi(g)U_t^{-1}.
$
This is a unitary representation for every real $t$, and it is of the form
\[
\pi_t(g)=u_t(g)\pi(g),
\qquad
u_t(g)=U_t\,(g\cdot U_t^{-1}),
\]
so it is a deformation in the sense of Definition~\ref{def:formal-unitary-deformation}.
To identify the first-order term, expand at $t=0$:
$
U_t=1_{\mathcal H}+tX+O(t^2),
$ and $
U_t^{-1}=1_{\mathcal H}-tX+O(t^2).
$
Hence
\[
u_t(g)
=\bigl(1_{\mathcal H}+tX+O(t^2)\bigr)\bigl(1_{\mathcal H}-t\,g\cdot X+O(t^2)\bigr)
=1_{\mathcal H}+t(X-g\cdot X)+O(t^2).
\]
Therefore the coefficient of $t$ is exactly
$
c(g)=X-g\cdot X.
$
So the deformation integrates the prescribed skew-adjoint inner cocycle.
\end{proof}

\section{Applications to cocompact lattices in Fuchsian buildings}
\label{sec:applications}

We now specialize to the left regular representation.

\begin{corollary}\label{cor:regular} Let $G$ be a finitely generated group.
Let $\La\colon G\to U(\ell^2 G)$ be the left regular representation. If $H^2\bigl(G,\ell^2(G)\bigr)=0$, then $n$-tuple of skew-adjoint elements in
$
Z^1(G,\HS(\ell^2 G)_{\Ad\La})
$
integrates to an analytic unitary Hilbert-Schmidt deformation of $\La$.
\end{corollary}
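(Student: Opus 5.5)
This is a direct specialization of Theorem~\ref{thm:unitaryanalytic} to $\pi=\La$ and $\mathcal H=\ell^2 G$. The only hypothesis that needs checking is the vanishing of $H^2(G,\HS(\ell^2 G)_{\Ad\La})$. That is supplied by Lemma~\ref{lem:transfer}: since $G$ is finitely generated, $H^2(G,\ell^2(G))=0$ is equivalent to $H^2(G,\HS(\ell^2G)_{\Ad\La})=0$. Lemma~\ref{lem:transfer} in turn rests on the Fell decomposition of Lemma~\ref{lem:HSdecomp} and the uniform primitive estimates of Lemma~\ref{lem:openmapping}.

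The steps, in order, are as follows. First, fix skew-adjoint cocycles $c_{e_1},\dots,c_{e_n}\in Z^1(G,\HS(\ell^2G)_{\Ad\La})$ and a finite symmetric generating set $S$. Second, invoke Lemma~\ref{lem:transfer} to transport the hypothesis $H^2(G,\ell^2(G))=0$ into the vanishing of the unreduced second cohomology with Hilbert--Schmidt coefficients. Third, apply Theorem~\ref{thm:unitaryanalytic}, which yields a constant $\eta\ge 1$ and a formal unitary deformation $\pi_t(g)=u_t(g)\La(g)$ whose coefficients satisfy the bound~\eqref{eq:A2}. This series converges absolutely on the simplex $a_1|t_1|+\cdots+a_n|t_n|<1/(4\eta)$, and its first-order coefficients are the prescribed $c_{e_i}$.

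Finally, I would check the qualifier ``Hilbert--Schmidt'', meaning that $u_t(g)-1_{\ell^2G}\in\HS(\ell^2G)$ for every $g$ and every $t$ in the convergence domain. For $g\in S$ this follows because the series $\sum_{|\alpha|\ge1}c_\alpha(g)t^\alpha$ converges absolutely in $\|\cdot\|_2$ and $\HS$ is complete. For general $g$ I would use the pointwise estimate of Lemma~\ref{lem:finitepropagation}, which gives a bound with the same Catalan majorant up to a factor $L$. Alternatively, one can argue by multiplicativity: products of elements of $1+\HS$ remain in $1+\HS$ by the ideal property, and $\La(h)$-conjugation preserves $\HS$. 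Unitarity and the homomorphism property of the limit pass from the coefficientwise identities~\eqref{eq:formal-unitary} by absolute convergence, which is already asserted in Theorem~\ref{thm:unitaryanalytic}.

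I expect no genuine obstacle, since the substantive work sits in Theorem~\ref{thm:unitaryanalytic} and Lemma~\ref{lem:transfer}. The only point needing slight care is that convergence on all of $G$, not merely on $S$, holds on the same domain. This is a local issue: for fixed $g$ the constant $L$ from Lemma~\ref{lem:finitepropagation} is a finite multiplicative factor and does not alter the radius. Equivalently, $u_t(g)$ is a finite product of translates of the $u_t(s)$, $s\in S$, each analytic in $t$ with values in $1+\HS$.
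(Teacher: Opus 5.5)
Your proposal is correct and matches the paper's proof: Lemma~\ref{lem:transfer} turns $H^2(G,\ell^2(G))=0$ into $H^2(G,\HS(\ell^2 G)_{\Ad\La})=0$, and Theorem~\ref{thm:unitaryanalytic} with $\pi=\La$ then gives the deformation. Your extra check that $u_t(g)-1_{\ell^2 G}\in\HS(\ell^2 G)$ for every $g$, on the same domain, via Lemma~\ref{lem:finitepropagation} or multiplicativity, spells out what the theorem's proof already covers when it passes from convergence on $S$ to convergence on all of $G$.
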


\begin{proof}
By Lemma~\ref{lem:transfer}, the hypothesis implies $H^2\bigl(G,\HS(\ell^2 G)_{\Ad\La}\bigr)=0$. The assertion is therefore an immediate consequence of Theorem~\ref{thm:unitaryanalytic}.
\end{proof}

If $G$ has cohomological dimension at most $1$, then $H^2(G,V)=0$ for every $G$-module $V$; see Brown~\cite[Chap.~VIII]{Brown1982}. In particular, for every free group $F_r$ with $r\ge 1$ one has
$
H^2\bigl(F_r,\ell^2(F_r)\bigr)=0.
$
Therefore Corollary~\ref{cor:regular} applies to free groups, and more generally to all groups of virtual cohomological dimension at most $1$. This case is structurally simpler, since one can essentially deform the generators independently, but it still serves as a useful sanity check and a simple example of the theory.

\medskip

In this section we explain that cocompact lattices in Fuchsian buildings
provide a natural and general source of examples for
Corollary~\ref{cor:regular}. In particular, this covers surface groups. However, the relevant geometric input is the conformal dimension of
the boundary. In the right-angled case this invariant was computed
explicitly by Bourdon \cite{Bou97}, and his vanishing theorem for
unreduced $\ell^p$-cohomology gives a clean sufficient criterion for the
vanishing of the obstruction group $H^2(G,\ell^2(G))$.

We begin with the general statement. The key point is that for a cocompact
lattice $\Gamma$ in a locally finite Fuchsian building $X$, the building $X$
is a cocompact model for $E_{\mathcal F\!in}\Gamma$. Hence \emph{unreduced} group
cohomology with coefficients in a unitary representation is computed by the
corresponding equivariant cochain complex on $X$. 

\begin{theorem}[Bourdon \cite{Bou16}, Bourdon--Pajot \cite{BP03}]\label{thm:fuchsian-building-application}
Let $X$ be a cocompact Fuchsian building, and let
$\Gamma<\Aut(X)$ be a cocompact lattice. Assume that
$
\Confdim(\partial X)<2.
$
Then
$$H^1(\Gamma,\ell^2(\Gamma)) \neq 0 \quad\text{and}\quad H^2(\Gamma,\ell^2(\Gamma))=0.$$

\end{theorem}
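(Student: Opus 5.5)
The plan is to compute both groups on the building itself. Since $X$ is a locally finite, contractible two-dimensional polyhedral complex, it is a cocompact model for $E_{\mathcal F\!in}\Gamma$. For a unitary representation over $\mathbb C$, finite stabilizers are harmless: averaging over a finite group is exact on unitary modules. Hence the unreduced cohomology $H^k(\Gamma,\ell^2(\Gamma))$ is computed by the complex of $\Gamma$-equivariant cochains on $X$ with values in $\ell^2(\Gamma)$. Unwinding the definitions, this complex is the simplicial (or cellular) $\ell^2$-cochain complex
\[
0\to \ell^2 C^0(X)\xrightarrow{d}\ell^2 C^1(X)\xrightarrow{d}\ell^2 C^2(X)\to 0
\]
with bounded differentials; boundedness holds because $X$ has bounded geometry. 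Thus $H^k(\Gamma,\ell^2\Gamma)\cong \ell^2H^k(X)$, the unreduced $\ell^2$-cohomology of $X$. This identification is the first step, and I regard it as standard, in the spirit of Lemma~\ref{lem:openmapping}, which already shows that unreduced cohomology is an honest Fr\'echet/Banach quotient.

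The second step is the vanishing $H^2$. Since $X$ is two-dimensional, $\ell^2H^2(X)$ is the cokernel of $d\colon \ell^2C^1\to \ell^2C^2$, so we need this map to be surjective. Here I would invoke Bourdon's vanishing theorem \cite{Bou16}: for a Fuchsian building, the top-degree unreduced $\ell^p$-cohomology vanishes for every $p>\Confdim(\partial X)$. Under the hypothesis $\Confdim(\partial X)<2$, this applies with $p=2$ and gives $\ell^2H^2(X)=0$. The mechanism behind Bourdon's theorem is a uniform isoperimetric (Poincar\'e-type) inequality, built from quasi-conformal geometry of the boundary, which produces bounded primitives for top-degree cochains. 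This is the main obstacle, and I would quote it rather than reprove it. What I would check carefully is that the cited statement concerns \emph{unreduced} cohomology in top degree and covers the full class of cocompact Fuchsian buildings, not only the right-angled ones.

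The third step is $H^1\neq 0$. Using the Bourdon--Pajot result \cite{BP03}, $\ell^pH^1(X)$ is nonzero, and in fact reduced-nonzero, for every $p>\Confdim(\partial X)$. Concretely, it contains nonconstant boundary functions in the Besov space $B_p(\partial X)$, extended harmonically or by averaging into $X$. Taking $p=2>\Confdim$ gives $\ell^2H^1(X)\neq 0$, hence $H^1(\Gamma,\ell^2\Gamma)\neq0$ by the first step. There is also an alternative argument when Euler characteristics are available. Because the group is infinite, $\ell^2H^0=0$, and $H^2=0$ has just been shown. The $\ell^2$-Euler characteristic $\chi(\Gamma)$, computed from the cells of $X/\Gamma$, is negative for Fuchsian buildings of hyperbolic type. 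Since $\chi(\Gamma)=-\dim_{L\Gamma}\overline{H}^1$, the first $\ell^2$-Betti number is positive, which again forces $H^1\neq0$. I would present the Bourdon--Pajot route as the main argument and the Euler characteristic route as a cross-check.
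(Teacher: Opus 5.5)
Your main argument is essentially the paper's proof. You take $X$ as a cocompact model for $E_{\mathcal F\!in}\Gamma$, so that $H^k(\Gamma,\ell^2\Gamma)\cong\ell^2H^k(X)$ in unreduced cohomology, then quote Bourdon's vanishing theorem with $p=2>\Confdim(\partial X)$ for $H^2$ and Bourdon--Pajot for $H^1\neq0$, exactly as the paper does. Your Euler-characteristic cross-check does not work as stated, though. It must use the orbifold characteristic $\sum_\sigma(-1)^{\dim\sigma}/|\Gamma_\sigma|$. It is also not negative for all Fuchsian buildings: for $\Gamma_{5,10}$ acting on $I_{5,10}$ it equals $1-\tfrac{5}{10}+\tfrac{5}{100}>0$. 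So it is an independent check only where you compute $\chi$ explicitly, e.g.\ $\chi(\Gamma_{p,q})=1-p(q-1)/q^2<0$ for $q\le p-2$.
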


\begin{proof}
Since $X$ is a locally finite contractible Fuchsian building and
$\Gamma<\Aut(X)$ is a cocompact lattice, the action of $\Gamma$ on $X$ is
proper, cocompact, and all stabilisers are finite. Moreover, for any finite subgroup $H<\Gamma$, $X^H$ is contractible. Thus $X$ is a model for
$E_{\mathcal F\!in}\Gamma$, the classifying space for the family of finite
subgroups of $\Gamma$.

Because finite subgroups have vanishing higher cohomology with coefficients
in a unitary representation, the equivariant cellular $\ell^2$-cochain complex of $X$
computes the unreduced group cohomology $H^*(\Gamma,\ell^2(\Gamma))$. In particular,
for the left regular representation one obtains
$
H^k(\Gamma,\ell^2(\Gamma))\cong \ell^2 H^k(X),
$
where the right-hand side denotes unreduced $\ell^2$-cohomology of the
building $X$.
Now $X$ is a hyperbolic uniformly contractible complex of bounded geometry.
By Bourdon-Pajot's theorem \cite{BP03} and given the bound on the conformal dimension one has $\ell^2 H^1(X)\ne 0$, so $H^1(\Gamma,\ell^2(\Gamma))\ne 0$. Similarly, by Bourdon's vanishing theorem \cite{Bou16} for unreduced $\ell^p$-cohomology,
\[
\ell^p H^2(X)=0
\qquad\text{for every }p>\Confdim(\partial X).
\]
Since $\Confdim(\partial X)<2$, we may apply this with $p=2$ and obtain
$
\ell^2 H^2(X)=0
$, and hence
$
H^2(\Gamma,\ell^2(\Gamma))=0.
$
\end{proof}

This covers for example surface groups, where the claim can be proved directly, but more interestingly it applies to groups acting on buildings.
In order to provide explicit examples, we now specialise to Bourdon's right-angled Fuchsian buildings $I_{p,q}$. Here $p\ge 5$ and the chambers are regular right-angled
hyperbolic $p$-gons; every edge is contained in exactly $q$ chambers, so
the building is thick for $q\ge 3$.
The boundary conformal dimension is known exactly.

\begin{theorem}[Bourdon, see \cite{Bou97}]\label{thm:bourdon-confdim}
For the Bourdon building $I_{p,q}$ one has
\[
\Confdim(\partial I_{p,q})
=
1+\frac{\log(q-1)}{\arccosh\bigl(\frac{p-2}{2}\bigr)}.
\]
In particular, $\Confdim(\partial I_{p,q})<2$ if and only if
$q\le p-2.$
\end{theorem}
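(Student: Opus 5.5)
The formula for $\Confdim(\partial I_{p,q})$ itself I will take from Bourdon \cite{Bou97}; I do not plan to reprove it. What remains is the equivalence in the ``in particular'' clause, which is elementary. Put $x=\frac{p-2}{2}$. Since $p\ge 5$, we have $x\ge \frac32>1$, so $\arccosh x>0$. Since $q\ge 3$ (or $q\ge 2$), we have $\log(q-1)\ge 0$. Because the denominator is positive, the condition $\Confdim(\partial I_{p,q})<2$ is equivalent to $\log(q-1)<\arccosh x$. Exponentiating, this becomes
\[
q-1< e^{\arccosh x}=x+\sqrt{x^2-1}=:\theta_p .
\]

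The key step is to locate $\theta_p$ strictly between two consecutive integers, namely $p-3<\theta_p<p-2$.

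For the upper bound, use $\sqrt{x^2-1}<x$. This gives $\theta_p<2x=p-2$.

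For the lower bound, first note that $\sqrt{x^2-1}>x-\frac1x$. To see this, both sides are positive for $x>1$, and squaring gives $x^2-1>x^2-2+\frac1{x^2}$, which is equivalent to $x^2>1$. Hence
\[
\theta_p>2x-\tfrac1x=(p-2)-\tfrac{2}{p-2}\ge (p-2)-\tfrac23>p-3 .
\]

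Since $q-1$ is an integer and $\theta_p\in(p-3,p-2)$, the inequality $q-1<\theta_p$ holds if and only if $q-1\le p-3$, that is, if and only if $q\le p-2$. This is the claimed criterion.

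The only point needing care is that the criterion depends on this integrality argument, because $\theta_p$ itself is never an integer. The lower bound $\theta_p>p-3$ is where the hypothesis $p\ge 5$ enters, through the estimate $\frac{2}{p-2}<1$.
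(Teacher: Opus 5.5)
Your proposal is correct and in line with the paper, which likewise takes the formula from Bourdon and states the ``in particular'' clause without any argument. Your bracketing $p-3<e^{\arccosh((p-2)/2)}<p-2$, combined with the integrality of $q-1$, is exactly the elementary verification that the paper leaves implicit.
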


For the right-angled building $I_{p,q}$ there is a standard cocompact
lattice which is particularly concrete. It may be described as the graph
product of $p$ cyclic groups of order $q$ over the cycle graph $C_p$:
\[
\Gamma_{p,q}
=
\Gamma(C_p;C_q,\dots,C_q)
=
\bigl\langle
x_1,\dots,x_p
\ \big|\
x_i^q=1,\ [x_i,x_{i+1}]=1\ \forall\, i\in \mathbb Z/p\mathbb Z
\bigr\rangle.
\]
This group acts chamber-transitively on $I_{p,q}$ as a uniform lattice.
Therefore our result applies in
particular to $\Gamma_{p,q}$ whenever $q\le p-2$.

\begin{corollary} 
\label{cor:main}
Let $X$ be a cocompact Fuchsian building, and let
$\Gamma<\Aut(X)$ be a cocompact lattice. Assume that
$
\Confdim(\partial X)<2,$ for instance if $X=I_{p,q}$ with $q\le p-2$ and $\Gamma=\Gamma_{p,q}$. Then, every $n$-tuple of skew-adjoint elements in
$
Z^1(\Gamma,\HS(\ell^2(\Gamma))_{\Ad\lambda})
$
integrates to an $n$-parameter analytic unitary Hilbert-Schmidt
deformation of the left regular representation of $\Gamma$.
\end{corollary}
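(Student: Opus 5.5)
The plan is to chain Theorem~\ref{thm:fuchsian-building-application} with Corollary~\ref{cor:regular}. First, we check that $\Gamma$ is finitely generated, which Corollary~\ref{cor:regular} needs. Since $X$ is locally finite and $\Gamma$ acts properly and cocompactly on the connected complex $X$, the standard Švarc--Milnor argument applies: take a compact fundamental domain $D$ and the finite set of $\gamma$ with $\gamma D\cap D\neq\emptyset$. This gives a finite generating set. (In fact $\Gamma$ is even finitely presented, because $X$ is simply connected, but we do not need this.)

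Second, Theorem~\ref{thm:fuchsian-building-application} applies under the hypothesis $\Confdim(\partial X)<2$ and gives $H^2(\Gamma,\ell^2(\Gamma))=0$ in the unreduced sense. Corollary~\ref{cor:regular} uses exactly this unreduced vanishing, since the open mapping argument of Lemma~\ref{lem:openmapping} and the transfer Lemma~\ref{lem:transfer} are both formulated for unreduced cohomology. So Corollary~\ref{cor:regular} applies directly. It says that every skew-adjoint $n$-tuple in $Z^1(\Gamma,\HS(\ell^2(\Gamma))_{\Ad\lambda})$ integrates to an analytic unitary deformation on a simplex neighbourhood of $0$, as in Theorem~\ref{thm:unitaryanalytic}. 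The Hilbert--Schmidt property means $u_t(g)-1\in\HS$, which holds because the series converges absolutely in $\|\cdot\|_2$.

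Third, for the explicit example we use Theorem~\ref{thm:bourdon-confdim}. The condition $q\le p-2$ is equivalent to $\log(q-1)<\arccosh((p-2)/2)$, that is, $(q-1)+(q-1)^{-1}<p-2$, and this gives $\Confdim(\partial I_{p,q})<2$. We also need the fact recorded before the corollary that $\Gamma_{p,q}$ acts on $I_{p,q}$ as a cocompact lattice. The geometric part of this, that $\Gamma_{p,q}$ acts with finite stabilisers and compact quotient (a single chamber), is quoted from Bourdon. With these two inputs the first case applies.

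The only substantive obstacle is upstream: the identification $H^2(\Gamma,\ell^2\Gamma)\cong \ell^2H^2(X)$ in the unreduced sense, used in Theorem~\ref{thm:fuchsian-building-application}. Since we may assume that theorem, the proof of the corollary itself is a short combination of results. The one point to verify carefully is the finite generation of $\Gamma$.
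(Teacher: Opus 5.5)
Your proposal is correct and follows the paper's route: the unreduced vanishing $H^2(\Gamma,\ell^2(\Gamma))=0$ from Theorem~\ref{thm:fuchsian-building-application} is fed into Corollary~\ref{cor:regular} (that is, Lemma~\ref{lem:transfer} plus Theorem~\ref{thm:unitaryanalytic}), and Theorem~\ref{thm:bourdon-confdim} supplies the $I_{p,q}$ example. Your explicit check that $\Gamma$ is finitely generated is a worthwhile addition, since Corollary~\ref{cor:regular} requires it and the paper leaves it implicit.
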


The family $I_{p,q}$ is only the most classical example. More generally,
Fuchsian buildings arise from several constructions.
Bourdon and Gaboriau--Paulin constructed many examples as universal covers
of hyperbolic polygons of finite groups; see \cite{Bou00,GP01}. In particular, Gaboriau--Paulin
produce non-right-angled Fuchsian buildings, while Bourdon also discusses
examples with triangular chambers. There are also triangular hyperbolic
buildings constructed by Vdovina and by Carbone--Kangaslampi--Vdovina; see
\cite{Vd02,CKV11}. Further examples come from the hyperbolic Kac--Moody setting; see for instance \cite{Thomas12}. For a
general overview of these constructions and of lattices in hyperbolic
buildings, we refer to Thomas's survey \cite{Thomas12}. 

\begin{remark}
\label{rem:equivalence}
In the case of free groups, see Section \ref{sec:motivation}, it turned out that for small values of the deformation parameter the deformed representations constructed by Pytlik-Szwarc are equivalent to the left-regular representation. While this can be guaranteed for cocycles taking values in $\ell^2(F_k) \otimes \mathbb C[F_k] \subseteq \HS(\ell^2(F_k))$, it is an interesting open question whether this phenomenon persists for arbitrary unitary Hilbert-Schmidt deformations of the left-regular representation of a free group. Moreover, this question is also interesting for surface groups and more generally for the groups studied in this section.
\end{remark}

\section*{Acknowledgements}

The second author thanks Antonio L\'opez Neumann for interesting discussions on the last section. GPT (OpenAI) was used to assist in drafting parts of this manuscript. All content was reviewed and substantially revised by the authors, who are responsible for the final text. The content of this article is also part of the forthcoming doctoral thesis of the first author.

\end{document}